\documentclass[english,oneside]{smfart}
\usepackage[no-math]{fontspec}
\usepackage{amsthm,mathtools,amssymb,textcomp,extarrows,bm,mleftright,graphicx,stmaryrd,scalerel,enumitem,float,booktabs,fixdif,fontawesome5}
\newcommand{\link}[1]{\textsuperscript{\href{#1}{\faIcon{external-link-alt}}}}
\usepackage[dvipsnames,svgnames]{xcolor}
\DeclareMathAlphabet{\mathbbb}{U}{bbold}{m}{n}
\mleftright

\makeatletter
\AtBeginDocument{%
 \let\glb@currsize\relax
}
\makeatother

\usepackage{mathrsfs}

\ExplSyntaxOn
\NewDocumentCommand{\definealphabet}{mmmm}
 {%
  \int_step_inline:nnn { `#3 } { `#4 }
   {
    \cs_new_protected:cpx { #1 \char_generate:nn { ##1 }{ 11 } }
     {
      \exp_not:N #2 { \char_generate:nn { ##1 } { 11 } }
     }
   }
 }
\ExplSyntaxOff
\definealphabet{bb}{\mathbb}{A}{Z}
\definealphabet{cal}{\mathcal}{A}{Z}
\definealphabet{frak}{\mathfrak}{A}{z}
\definealphabet{rm}{\mathrm}{A}{z}
\definealphabet{bf}{\mathbf}{A}{z}
\definealphabet{scr}{\mathscr}{A}{Z}
\definealphabet{tt}{\mathtt}{A}{Z}
\definealphabet{ul}{\underline}{A}{z}
\newcommand{\Gal}{\operatorname{Gal}}
\newcommand{\id}{\operatorname{id}}
\newcommand{\cinv}{^{\circ-1}}
\newcommand{\ev}{\operatorname{ev}}
\newcommand{\FT}{\operatorname{FT}}

\newcommand{\lto}{\longrightarrow}
\newfontface\logix {logix.otf}[Scale=1.0,NFSSFamily=logix]
\DeclareSymbolFont{Logix}{TU}{logix}{m}{n}
\newcommand{\lgl}[1]{\ensuremath{\Udelimiter 4 \symLogix "#1}}

\newcommand{\OpnDblParn}{\lgl{E31B}\relax}
\newcommand{\ClsDblParn}{\lgl{E35B}\relax}

\newcommand{\pparen}[1]{\left\OpnDblParn #1\right\ClsDblParn}
\newcommand{\bbrac}[1]{\left\llbracket #1\right\rrbracket}

\usepackage{hyperref}
\usepackage[nameinlink]{cleveref}
\Crefname{theorem}{Theorem}{Theorems}
\Crefname{introthm}{Theorem}{Theorems}
\Crefname{conjecture}{Conjecture}{Conjectures}
\Crefname{lemma}{Lemma}{Lemmas}
\Crefname{definition}{Definition}{Definitions}
\Crefname{remark}{Remark}{Remarks}
\Crefname{question}{Question}{Questions}
\Crefname{proposition}{Proposition}{Propositions}
\Crefname{corollary}{Corollary}{Corollaries}
\Crefname{equation}{}{}
\Crefname{item}{}{}
\Crefname{example}{Example}{Examples}
\Crefname{proof}{Proof}{Proofs}
\newlist{thmenum}{enumerate}{1}
\setlist[thmenum]{label=\arabic*., ref=\thetheorem~(\arabic*)}
\crefalias{thmenumi}{theorem}
\newlist{propenum}{enumerate}{1}
\setlist[propenum]{label=\arabic*., ref=\theproposition~(\arabic*)}
\crefalias{propenumi}{proposition}
\newlist{lemenum}{enumerate}{1}
\setlist[lemenum]{label=\arabic*., ref=\thelemma~(\arabic*)}
\crefalias{lemenumi}{lemma}
\AddToHook{env/introthm/before}{\addvspace{3pt}}
\AddToHook{env/introthm/after}{\addvspace{3pt}}
\AddToHook{env/theorem/before}{\addvspace{3pt}}
\AddToHook{env/theorem/after}{\addvspace{3pt}}
\AddToHook{env/example/before}{\addvspace{3pt}}
\AddToHook{env/example/after}{\addvspace{3pt}}
\AddToHook{env/definition/before}{\addvspace{3pt}}
\AddToHook{env/definition/after}{\addvspace{3pt}}
\AddToHook{env/proposition/before}{\addvspace{3pt}}
\AddToHook{env/proposition/after}{\addvspace{3pt}}
\AddToHook{env/lemma/before}{\addvspace{3pt}}
\AddToHook{env/lemma/after}{\addvspace{3pt}}
\AddToHook{env/remark/before}{\addvspace{3pt}}
\AddToHook{env/remark/after}{\addvspace{3pt}}
\AddToHook{env/corollary/before}{\addvspace{3pt}}
\AddToHook{env/corollary/after}{\addvspace{3pt}}

\usepackage[url=true,backend=biber,style=ext-alphabetic,hyperref=true,giveninits=true]{biblatex}
\makeatletter
\renewcommand*\subjclass[2][2020]{%
  \def\@subjclass{#2}%
  \@ifundefined{subjclassname@#1}{%
    \ClassWarning{\@classname}{Unknown edition (#1) of Mathematics
      Subject Classification; using '2020'.}%
  }{%
    \@xp\let\@xp\subjclassname\csname subjclassname@#1\endcsname
  }%
}
\@namedef{subjclassname@1991}{%
  \textup{1991} Mathematics Subject Classification}
\@namedef{subjclassname@2000}{%
  \textup{2000} Mathematics Subject Classification}
\@namedef{subjclassname@2010}{%
  \textup{2010} Mathematics Subject Classification}
\@namedef{subjclassname@2020}{%
  \textup{2020} Mathematics Subject Classification}
\@xp\let\@xp\subjclassname\csname subjclassname@2020\endcsname
\makeatother

\newtheorem{theorem}{Theorem}[section]

\AddToHook{env/example/begin}{\crefalias{theorem}{example}}
\newtheorem{lemma}[theorem]{Lemma}
\AddToHook{env/lemma/begin}{\crefalias{theorem}{lemma}}
\newtheorem{remark}[theorem]{Remark}
\AddToHook{env/remark/begin}{\crefalias{theorem}{remark}}
\newtheorem{proposition}[theorem]{Proposition}
\AddToHook{env/proposition/begin}{\crefalias{theorem}{proposition}}
\newtheorem{definition}[theorem]{Definition}
\AddToHook{env/definition/begin}{\crefalias{theorem}{definition}}

\AddToHook{env/conjecture/begin}{\crefalias{theorem}{conjecture}}

\AddToHook{env/corollary/begin}{\crefalias{theorem}{corollary}}

\AddToHook{env/question/begin}{\crefalias{theorem}{question}}

\AddToHook{env/assumption/begin}{\crefalias{theorem}{assumption}}

\newtheorem{introthm}{Theorem}

\Crefformat{enumi}{#2\textup{(#1)}#3}

\usepackage[verbose=silent]{microtype}
\usepackage{geometry}

\let\opn\operatorname

\numberwithin{equation}{section}

\title{Non-liftability of the field of norms of the $p$-adic false Tate curve extension}
\usepackage{orcidlink}
\author{Yijun Yuan\orcidlink{0000-0001-6571-6980}}
\address{Institute for Theoretical Sciences, Westlake University, No. 600 Dunyu Road, Sandun town, Xihu district, Hangzhou, Zhejiang Province, 310030, China}
\email{941201yuan@gmail.com}
\urladdr{https://yijunyuan.github.io/}

\begin{document}
\subjclass{11S15, 11S20, 11S31, 13F25, 68V20}
\keywords{field of norms, false Tate curve extension, liftability, ramification}
\begin{abstract}
	Let $p\geq3$ be a prime number. We prove that the action of the Galois group on the field of norms of the $p$-adic false Tate curve extension $\bfQ_p\left(\zeta_{p^\infty},p^{1/p^\infty}\right)$ of $\bfQ_p$ is not liftable to its Cohen ring with respect to any admissible coefficient field.
\end{abstract}

\maketitle
\tableofcontents

\section{Introduction}\label{sec:85117}

Let $p\geq 3$ be a prime number. Let $K$ be a finite extension of $\bfQ_p$ with residue field $\bfF_q$, and let $L$ be an infinite totally ramified Galois extension of $K$ whose Galois group is a $p$-adic Lie group. By a theorem of Sen, such an extension is strictly arithmetically profinite (cf. \cite[\S1]{bergerLiftingFieldNorms2014}), and Fontaine and Wintenberger \cite{fontaineExtensionsAlgebriquesCorps1979,fontaineCorpsNormesCertaines1979,wintenbergerCorpsNormesCertaines1983} attach to it the \emph{field of norms} $X_K(L)$. This is a complete discrete valuation field of characteristic $p$ with residue field $\bfF_q$, and $\Gamma\coloneqq\Gal(L/K)$ acts continuously on $X_K(L)$ by field automorphisms. Moreover, the absolute Galois group of $X_K(L)$ is canonically isomorphic to that of $L$.

For the cyclotomic extension $\bfQ_p^{\opn{cyc}}\coloneqq\bigcup_n \bfQ_p(\mu_{p^n})$, Fontaine \cite{Fontaine2007} lifted this structure to characteristic $0$: the $p$-Cohen ring $\bfA_{\bfQ_p}$ of $X_{\bfQ_p}(\bfQ_p^{\opn{cyc}})$ carries a lifted Frobenius map $\varphi$ and a continuous action of $\Gamma=\Gal(\bfQ_p^{\opn{cyc}}/\bfQ_p)$, which commute with each other. This allowed him to prove the equivalence between the category of $\bfZ_p$-representations of $\Gal(\overline{\bfQ}_p/\bfQ_p)$ and the category of \'etale $(\varphi,\Gamma)$-modules over $\bfA_{\bfQ_p}$. This equivalence, which can be generalized to any finite extension $K$ of $\bfQ_p$, has become a fundamental tool in $p$-adic Hodge theory, and is a crucial ingredient in Colmez's construction of the $p$-adic local Langlands correspondence for $\opn{GL}_2(\bfQ_p)$ (cf. \cite{AST_2010__330__281_0}).

\subsection{Liftability of the field of norms}
To establish the $p$-adic local Langlands correspondence beyond $\opn{GL}_2(\bfQ_p)$, an analogous theory of $(\varphi,\Gamma)$-modules for the $p$-adic Lie extensions $L$ of $K$ other than the cyclotomic one is desired. Such a theory exists as soon as the action of $\Gamma$ on $X_K(L)$ can be lifted together with a commuting Frobenius. Berger formulated this condition as follows.
\begin{definition}[cf. {\cite[Question 1.2]{bergerLiftingFieldNorms2014}}]\label{def:49607}
	Let $F$ be a finite extension of $\bfQ_p$ with uniformizer $\varpi_F$ and the same residue field $\bfF_q$ as $K$. Let\footnote{We emphasize that the definition of $\bfA_K$ simultaneously depends on $K$, $L$ and $F$. We omit $L$ and $F$ from the notation for simplicity when they are clear from the context. } $\bfA_K$ be the $\varpi_F$-adic completion of $\calO_F\bbrac{T}[1/T]$, which is a complete discrete valuation ring with uniformizer $\varpi_F$ and residue field $\bfF_q\pparen{T}$. We say that the action of $\Gamma\coloneqq \Gal(L/K)$ on $X_K(L)$ is \textbf{$F$-liftable} if there exist elements $P$ and $(F_g)_{g\in\Gamma}$ in $\bfA_K$ such that
	\begin{enumerate}
		\item $\overline{P}=T^q$ and $\opn{ev}_u(\overline{F_g})=g\ast u$ for every $g\in\Gamma$, where $\overline{P}$ and $\overline{F_g}$ denote the reductions of $P$ and $F_g$ modulo $\varpi_F$, $u$ is a uniformizer of $X_K(L)$, and $\opn{ev}_u\colon \bfF_q\pparen{T}\lto X_K(L)$ is the isomorphism that sends $T$ to $u$.
		\item $F_g\circ P=P\circ F_g$ and $F_h\circ F_g=F_{gh}$ for all $g,h\in\Gamma$.
	\end{enumerate}
\end{definition}
\begin{remark}
	We will show in \Cref{prop:71074} that the $F$-liftability of the action of $\Gamma$ on $X_K(L)$ is independent of the choice of uniformizer $u$.
\end{remark}
Berger proved in \cite[Theorem B]{bergerLiftingFieldNorms2014} that if the action of $\Gal(L/K)$ on $X_K(L)$ is $F$-liftable with $P\in\calO_F\bbrac{T}$, then $\Gal(L/K)$ is abelian. Poyeton \cite[Theorem 0.1]{poyetonFormalGroupsLifts2022} showed that, under the same assumption on $P$, if moreover $F/\bfQ_p$ is Galois and contains $K$, then there exist a finite extension $K'$ of $\bfQ_p$ and a relative Lubin--Tate formal group $S$ over a subfield of $K'$ such that $L$ is contained in the extension of $K'$ generated by the torsion points of $S$, with finite index.

\subsection{The false Tate curve extension}\label{subsec:93286}

The results just quoted assume that the lifted Frobenius $P$ is a power series, which excludes many interesting $p$-adic Lie extensions. The simplest non-commutative example is the \textbf{$p$-adic false Tate curve extension} of $\bfQ_p$,
$$\bfQ_p^{\opn{FT}}\coloneqq \bigcup_n \bfQ_p\left(\zeta_{p^n},p^{1/p^n}\right),$$
where $\left(\zeta_{p^n}\right)_{n\geq1}$ and $\left(p^{1/p^n}\right)_{n\geq1}$ are fixed compatible systems of primitive $p^n$-th roots of unity and of $p^n$-th roots of $p$. This is a totally ramified $2$-dimensional $p$-adic Lie extension of $\bfQ_p$ with non-commutative Galois group. As the normal closure of the Kummer extension $\bfQ_p^{\opn{Kum}}\coloneqq\bigcup_n \bfQ_p\left(p^{1/p^n}\right)$, it appears in various works on (integral) $p$-adic Hodge theory, and serves as a basic test-case in non-commutative Iwasawa theory. We summarize the structural results on the Galois group of the false Tate curve extension as follows.
\begin{lemma}[cf. {\cite[Corollary 6.2]{vivianiRamificationGroupsArtin2004}, \cite[Lemma 5.1.2]{liuLatticesSemistableRepresentations2008}}]\label{lem:23436}
	\begin{enumerate}
		\item The extension $\bfQ_p^{\FT}/\bfQ_p$ is totally ramified.
		\item One has isomorphisms of Galois groups
		      $$\Gal\left(\bfQ_p^{\opn{FT}}\middle/\bfQ_p\right)\cong \Gal\left(\bfQ_p^{\opn{FT}}\middle/\bfQ_p^{\opn{cyc}}\right)\rtimes\Gal\left(\bfQ_p^{\opn{FT}}\middle/\bfQ_p^{\opn{Kum}}\right),$$
			  $$\Gal\left(\bfQ_p^{\opn{FT}}\middle/\bfQ_p^{\opn{Kum}}\right)\cong\Gal\left(\bfQ_p^{\opn{cyc}}\middle/\bfQ_p\right)\cong\bfZ_p^\times \text{ and }\Gal\left(\bfQ_p^{\opn{FT}}\middle/\bfQ_p^{\opn{cyc}}\right)\cong\bfZ_p.$$
		      By abuse of notation, we identify the groups on both sides of the isomorphisms above.
		\item Let $\tau$ be the topological generator of $\Gal\left(\bfQ_p^{\opn{FT}}\middle/\bfQ_p^{\opn{cyc}}\right)$ that sends $p^{1/p^n}$ to $\zeta_{p^n}\cdot p^{1/p^n}$  for every $n\geq1$,
		      and let $\delta$ be a generator of the torsion subgroup of
		      $$\Gal\left(\bfQ_p^{\opn{FT}}\middle/\bfQ_p^{\opn{Kum}}\right)\cong\bfZ_p^\times\cong\bfZ_p\times\left(\bfZ/p\bfZ\right)^\times.$$
		      Then one has $\delta\cdot\tau=\tau^{\chi(\delta)}\cdot\delta$, where $\chi$ is the $p$-adic cyclotomic character.
	\end{enumerate}
\end{lemma}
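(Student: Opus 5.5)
We prove the three assertions in order, working at each finite level $K_n\coloneqq\bfQ_p\left(\zeta_{p^n},p^{1/p^n}\right)$ and passing to the union at the end.

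\emph{Total ramification.} The extension $\bfQ_p(\zeta_{p^n})/\bfQ_p$ is totally ramified of degree $(p-1)p^{n-1}$, since $\zeta_{p^n}-1$ is a root of an Eisenstein polynomial. The extension $\bfQ_p(p^{1/p^n})/\bfQ_p$ is totally ramified of degree $p^n$, since $X^{p^n}-p$ is Eisenstein. The two degrees have least common multiple $(p-1)p^{n}$, so $[K_n:\bfQ_p]\geq (p-1)p^n$. Moreover $K_n$ is obtained from $\bfQ_p(\zeta_{p^n})$ by adjoining a root of $X^{p^n}-p$, so $[K_n:\bfQ_p]\leq (p-1)p^{2n-1}$. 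We next show that $[K_n:\bfQ_p]=(p-1)p^{2n-1}$ and that the ramification index $e(K_n/\bfQ_p)$ also equals this number.

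For the degree, we use Kummer theory over $\bfQ_p(\zeta_{p^n})$: the degree of $K_n$ over $\bfQ_p(\zeta_{p^n})$ is the order of the class of $p$ in $\bfQ_p(\zeta_{p^n})^\times/(\cdot)^{p^n}$. It therefore suffices to check that $p$ is not a $p$-th power in $\bfQ_p(\zeta_{p^n})$. Since $p=\prod_{a\in(\bfZ/p^n)^\times}(1-\zeta_{p^n}^a)$ has valuation $1$ for the normalized valuation of $\bfQ_p$, and the ramification index of $\bfQ_p(\zeta_{p^n})$ is $(p-1)p^{n-1}$, a $p$-th root would need valuation $1/p$. That root would then generate a subextension of ramification index divisible by $p$ whose degree divides $(p-1)p^{n-1}$. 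A finer argument is needed here. Note that $p\cdot(\text{unit})$ is a $p$-th power exactly when $p$ is a $p$-th power times a unit, and $\bfQ_p(\zeta_{p^n})$ is abelian over $\bfQ_p$. By Kummer theory over $\bfQ_p(\zeta_p)$ and Galois descent, $\bfQ_p(p^{1/p})\subset\bfQ_p(\zeta_{p^n})$ would force $\bfQ_p(p^{1/p})/\bfQ_p$ to be abelian, hence Galois. This is false for $p\geq3$, since that extension does not contain $\zeta_p$. Hence the degree is $(p-1)p^{2n-1}$.

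For the ramification, note that the residue field of $K_n$ is generated by the residue fields of the two subfields only up to this tower. The cleanest route is to cite the known computation in \cite{vivianiRamificationGroupsArtin2004}, which shows that the upper ramification filtration on $\Gal(K_n/\bfQ_p)$ starts with the full group. Alternatively, we can argue directly. The extension $K_n/\bfQ_p(\zeta_{p^n})$ is a $p$-power Kummer extension. Its maximal unramified subextension would be generated by the $p^n$-th root of a unit times a $p^n$-th power, so it would be trivial, because $p$ is not congruent to a unit modulo $p^n$-th powers. This uses the valuation of $p$ modulo $p^n\cdot v(\cdot)$: the maximal unramified subextension of a Kummer extension $M(\alpha^{1/m})$ corresponds to the subgroup of $\langle\alpha\rangle$ consisting of units times $m$-th powers. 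Since $v(p)=(p-1)p^{n-1}$ in the normalized valuation of $\bfQ_p(\zeta_{p^n})$, the element $p^k$ lies in that subgroup only if $p^n\mid k(p-1)p^{n-1}$, i.e. $p\mid k$. This only kills the level $p^{n-1}$, so we must iterate and handle unit $p$-th powers. This step is the main obstacle, and we defer to the cited references for it. Passing to the union then gives the first assertion.

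\emph{Structure of the Galois group.} Define $\tau$-type and $\delta$-type elements via their actions on $\zeta_{p^n}$ and $p^{1/p^n}$. Any $g\in\Gal(K_n/\bfQ_p)$ is determined by $g(\zeta_{p^n})=\zeta_{p^n}^{a}$ and $g(p^{1/p^n})=\zeta_{p^n}^{b}p^{1/p^n}$, with $a\in(\bfZ/p^n)^\times$ and $b\in\bfZ/p^n$. This gives an injection into the affine group $(\bfZ/p^n)\rtimes(\bfZ/p^n)^\times$, which is a bijection by the degree count. Taking inverse limits identifies $\Gal(\bfQ_p^{\FT}/\bfQ_p)$ with $\bfZ_p\rtimes\bfZ_p^\times$. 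Here the normal subgroup $\{a=1\}$ is $\Gal(\bfQ_p^{\FT}/\bfQ_p^{\opn{cyc}})\cong\bfZ_p$, and the complement $\{b=0\}$ fixes every $p^{1/p^n}$, so it is $\Gal(\bfQ_p^{\FT}/\bfQ_p^{\opn{Kum}})$. This complement maps isomorphically onto $\Gal(\bfQ_p^{\opn{cyc}}/\bfQ_p)\cong\bfZ_p^\times$ via $\chi$.

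\emph{The commutation relation.} Finally, $\delta\tau\delta^{-1}$ and $\tau^{\chi(\delta)}$ agree on $\zeta_{p^n}$, since both fix it. On $p^{1/p^n}$ they also agree: $\delta^{-1}$ fixes $p^{1/p^n}$, then $\tau$ multiplies it by $\zeta_{p^n}$, and then $\delta$ sends this to $\zeta_{p^n}^{\chi(\delta)}p^{1/p^n}$. This equals $\tau^{\chi(\delta)}(p^{1/p^n})$, which proves the third assertion.
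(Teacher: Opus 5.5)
Your proofs of the degree formula $[K_n:\bfQ_p]=(p-1)p^{2n-1}$, of the semidirect product structure, and of $\delta\tau\delta^{-1}=\tau^{\chi(\delta)}$ are correct and self-contained. The paper gives no argument for this lemma and simply cites Viviani and Liu. The gap is in assertion (1): you never prove that $K_n/\bfQ_p$ is totally ramified, and instead defer the decisive step to the references.

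The Kummer-theoretic criterion you use along the way is also false in the wild case. When $p\mid m$, an extension $M(\beta^{1/m})$ with $\beta$ a unit times an $m$-th power need not be unramified. For example, with $M\coloneqq\bfQ_p(\zeta_{p^n})$, the extension $M(\zeta_{p^n}^{1/p})=\bfQ_p(\zeta_{p^{n+1}})$ is ramified over $M$. Only the implication ``unramified $\Rightarrow$ unit times $m$-th power'' holds. This is exactly why your valuation argument rules out only the top layer and then stalls on unit Kummer generators.

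The missing step is short with tools you already have. Since $K_n/M$ is cyclic of degree $p^n$, its intermediate fields form the chain $M(p^{1/p^j})$, $0\le j\le n$. If $K_n/M$ were not totally ramified, its maximal unramified subextension would therefore contain $M(p^{1/p})$. That field would then be the unique unramified degree-$p$ extension $M\bfQ_{p^p}$ of $M$, which is abelian over $\bfQ_p$. Then $\bfQ_p(p^{1/p})/\bfQ_p$ would be Galois, which is the same contradiction you used for the degree count.

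Alternatively, use your identification $\Gal(K_n/\bfQ_p)\cong\bfZ/p^n\bfZ\rtimes(\bfZ/p^n\bfZ)^\times$, which does not depend on (1). With the law $(b_1,a_1)(b_2,a_2)=(b_1+a_1b_2,a_1a_2)$, the commutator of $(0,a)$ and $(b,1)$ is $((a-1)b,1)$. Taking $a=2$, which is allowed since $p$ is odd, shows that the commutator subgroup is $\Gal(K_n/M)$. So the maximal abelian subextension of $K_n/\bfQ_p$ is $M$. The maximal unramified subextension is cyclic over $\bfQ_p$, so it lies in the totally ramified field $M$ and is trivial.

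Either way $K_n/\bfQ_p$ is totally ramified, and passing to the union gives (1).
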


Note that every uniformizer of the field of norms $X_{\bfQ_p}\left(\bfQ_p^{\FT}\right)$ is a norm-compatible family of uniformizers of the finite layers $\bfQ_p\left(p^{1/p^m},\zeta_{p^n}\right)$. An explicit construction of such a family would give a direct description of the action of $\Gal\left(\bfQ_p^{\FT}\middle/\bfQ_p\right)$ on the field of norms, and hence a concrete approach to the liftability problem (cf. \cite{BergerComment}, \cite[page 780]{vivianiRamificationGroupsArtin2004}, \cite[Remark 4.5]{caisCharacterizationStrictlyAPF2016}). Partial results in this direction are obtained in \cite{bellemareExplicitUniformizersCertain2020,wangUniformizerFalseTate2021,wangUniformizerFalseTate2024}.

\subsection{Main results}\label{subsec:27566}

The main result of this article is a negative answer to the liftability problem for the false Tate curve extension over every coefficient field allowed by \Cref{def:49607}. Since its field of norms has residue field $\bfF_p$, these coefficient fields are precisely the finite totally ramified extensions of $\bfQ_p$.%

\begin{introthm}\label{thm:84624}
	Let $p\geq3$. For every finite totally ramified extension $F/\bfQ_p$, the action of $\Gal\left(\bfQ_p^{\FT}\middle/\bfQ_p\right)$ on the field of norms $X_{\bfQ_p}\left(\bfQ_p^{\FT}\right)$ is not $F$-liftable.
\end{introthm}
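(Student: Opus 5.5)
We argue by contradiction. Suppose $P$ and $(F_g)_{g\in\Gamma}$ in $\bfA_{\bfQ_p}$ (built with coefficient field $F$) realize an $F$-lift, where $\Gamma=\Gal\left(\bfQ_p^{\FT}\middle/\bfQ_p\right)$. The idea is to play the ramification data of $\bfQ_p^{\FT}$, read off on the field of norms, against the rigidity that a commuting lifted Frobenius imposes.

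Step 1 is to fix what the field of norms knows. By the Fontaine--Wintenberger theory, for $g$ in the lower ramification filtration the quantity $v_u(g\ast u-u)-1$ equals the lower ramification break of $g$ in $L=\bfQ_p^{\FT}$. These breaks are computed in the cited work of Viviani. In particular, $\tau^{p^n}$ has a break of order $p^{2n}$ (up to explicit constants). By contrast, elements of the torsion part $\langle\delta\rangle$ act with $v_u(\delta\ast u-u)=1$, i.e. $\overline{F_\delta}\equiv \chi(\delta)T$ to first order after a suitable choice of $u$. The remark after \Cref{def:49607} (to be proved as \Cref{prop:71074}) lets us choose $u$ conveniently, for instance so that $\overline{F_\delta}$ is linearized as $\omega(\delta)T$, with $\omega$ the Teichmüller character.

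Step 2 lifts this to characteristic $0$. Average over the finite group $\langle\delta\rangle$ (of order $p-1$, prime to $p$) to change variables in $\bfA_K$. This makes $F_\delta(T)=[\omega(\delta)]T$ exactly, still commuting with $P$. Equivariance then forces $P(\zeta T)=\zeta P(T)$ for $\zeta\in\mu_{p-1}$. Next consider $F_\tau$. The relation $\delta\tau=\tau^{\chi(\delta)}\delta$ from \Cref{lem:23436} gives $F_\delta\circ F_\tau\circ F_\delta^{-1}=F_{\tau^{\chi(\delta)}}$. Writing $F_\tau=T+\varpi_F(\cdots)+(\text{higher in }T)$, commutation with $P$ and the functional equation constrain the expansion of $F_{\tau^{a}}$ as a function of $a\in\bfZ_p$.

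Step 3 derives the contradiction from growth. Because $\tau$ commutes with $P$, which lifts $T^p$, we get $\overline{F_{\tau}}\circ T^p=T^p\circ\overline{F_\tau}$ modulo $\varpi_F$; write $\overline{F_{\tau^{p^n}}}$ accordingly. The conjugation relation says $a\mapsto F_{\tau^a}$ is twisted-equivariant for the $\delta$-action. Taking a logarithm-type derivative at $a=0$, we obtain a "vector field" $\partial=\frac{d}{da}F_{\tau^a}|_{a=0}\in\bfA_K\otimes\bfQ_p$. This field satisfies $F_\delta{}_*\partial=\chi(\delta)\partial$ and $P_*\partial=\partial$, and it must be nonzero. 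We then compare $\varpi_F$-adic and $T$-adic valuations: $P$-invariance together with $P\equiv T^p$ forces $\partial$ to be, up to units, of the form $c\,P'(T)^{-1}$-compatible, i.e. logarithmic growth controlled like the cyclotomic case. There the breaks of $\tau^{p^n}$ would grow like $p^n$ rather than $p^{2n}$. This contradicts Step 1 and, via the Wintenberger ramification dictionary, the non-abelian growth of the upper numbering.

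The main obstacle is Step 3. One must make the differentiation in $a$ rigorous when $F_{\tau^a}$ is only known modulo $\varpi_F$ to interpolate continuously, and one must extract from $P_*\partial=\partial$ a precise bound on $v_T$ of $\overline{F_{\tau^{p^n}}}-T$ incompatible with quadratic growth of the breaks. I would first try to work modulo $\varpi_F^2$, analyzing $F_{\tau^{p^n}}=T+\varpi_F^{k_n}h_n$. The goal is to show $k_n$ and $v_T(h_n)$ grow at most linearly in $p^n$ using commutation with $P$; the case $P\notin\calO_F\bbrac{T}$ needs extra care with negative powers of $T$.
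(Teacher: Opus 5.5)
You have a genuine gap. Everything hinges on Step 3, and the argument sketched there fails for two reasons.

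\textbf{The vector field does not exist.} The object $\partial=\frac{d}{da}F_{\tau^a}|_{a=0}$ does not exist in $\bfA_{\bfQ_p}\otimes\bfQ_p$. Definition~\ref{def:49607} imposes no continuity, let alone analyticity, on $g\mapsto F_g$. In any case, for $a\neq0$ the difference $F_{\tau^a}-T$ has nonzero reduction $f_{\tau^a,u}-T$. So $(F_{\tau^a}-T)/a$ has $\varpi_F$-adic valuation $-e\,v_p(a)\to-\infty$ as $a\to0$. For the same reason you cannot write $F_{\tau^{p^n}}=T+\varpi_F^{k_n}h_n$ with $k_n\geq1$. Even in the cyclotomic case the generator is $(1+T)\log(1+T)\frac{d}{dT}$, which lives in the Robba ring, not in $\bfA\otimes\bfQ_p$. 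Making sense of it needs overconvergence and local analyticity, which you do not have for a hypothetical $P$ with genuinely negative powers of $T$.

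\textbf{The growth heuristic is false.} Your claim that commuting with a lift of $T^p$ forces the breaks of $\tau^{p^n}$ to grow like $p^n$ is wrong. Take $K=\bfQ_p(\sqrt p)$ and the Lubin--Tate extension attached to $\varpi_K=\sqrt p$. The field of norms has residue field $\bfF_p$, and the action is $K$-liftable with $P=[\varpi_K](T)$, $\overline P=T^p$, $F_g=[\chi_{\mathrm{LT}}(g)](T)$. Yet for $\gamma=1+\varpi_K$ one has $i(\gamma^{p^n})=p^{v_{\varpi_K}(\gamma^{p^n}-1)}-1=p^{2n+O(1)}-1$. This is exactly the growth you want to rule out.

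So the size of the breaks cannot be the obstruction. The missing idea is how to extract finer information from the commutation relation. The paper does this as follows.
\begin{itemize}
\item It works in $\bfR_{\bfQ_p}=\bfA_{\bfQ_p}/\varpi_F^{e+1}$. Working modulo $\varpi_F^2$ is not enough when $e>1$, since $(T^p)'=pT^{p-1}$ vanishes modulo $\varpi_F^e$.
\item It takes the first nonzero $\varpi_F$-adic layer $h_P$ of $P'$. The chain rule gives $h_P\circ f=(f')^{p-1}h_P$ for every commuting lift, and the argument splits on the level $s(P)$.
\item If $s(P)=e$, then $h_P-cT^{p-1}$ is a derivative, so $v_T(h_P')\leq p-2$. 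Putting $\tau^{p^k}$ in the normal form $T+\alpha T^{d_k+1}+O(T^{2d_k+1})$ is possible because $d_k\equiv-1\pmod p$ and $d_{k+1}\geq(p+1)d_k$. This yields $v_T(h_P)\leq p-1-d_k$ for all $k$, which is impossible as $d_k\to\infty$.
\item If $s(P)<e$, then $h_P\,\d T$ is exact. The tame element $\delta$ turns it into a nonzero $\tau$-invariant exact differential. An approximate-invariant argument then forces the $p$-adic limit of $d_k$ to be an integer, whereas it equals $(p-1)/(p+1)$.
\end{itemize}
The Lubin--Tate example above lands in the case $s(P)<e$, with $d_n\to-1\in\bfZ$ $p$-adically. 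So it is this non-integrality, not the size of the breaks, that distinguishes the false Tate curve extension. Your linearization of $F_\delta$ in Step 2 is legitimate under full liftability, but it does not feed into any of this.
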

In fact, the obstruction already occurs at a finite level:
\begin{proposition}\label{thm:42866}
	We use the same notation and assumptions as in \Cref{def:49607}, \Cref{lem:23436} and \Cref{thm:84624}. There do not exist elements $P,F_\tau,F_{\delta}$ in the ring
	$$\bfA_{\bfQ_p}/\varpi_F^{v_{\varpi_F}(p)+1}\bfA_{\bfQ_p}\cong \left(\calO_F/\varpi_F^{v_{\varpi_F}(p)+1}\right)\pparen{T}$$
	such that their reductions to $\bfF_p\pparen{T}$ satisfy the conditions that
	$$\overline P=T^p,\ \opn{ev}_u(\overline{F_\tau})=\tau\ast u,\ \opn{ev}_u(\overline{F_{\delta}})=\delta\ast u,$$
	$F_\tau\circ P=P\circ F_\tau$ and $F_\delta\circ P=P\circ F_\delta$.
	\end{proposition}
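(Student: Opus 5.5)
Write $e=v_{\varpi_F}(p)$, so $F/\bfQ_p$ is totally ramified of degree $e$ and the ring in question is $R\coloneqq(\calO_F/\varpi_F^{e+1})\pparen{T}$. The plan is a first-order deformation argument in $\varpi_F$, carried out in two steps. Modulo $\varpi_F^{e}$ we only see the characteristic-$p$ structure plus "$\varpi_F$-linear" corrections. At level $\varpi_F^{e+1}$ the first genuinely $p$-adic term enters through the identity $(a+\varpi_F b)^p\equiv a^p \bmod p\varpi_F$ and the failure of $(x+y)^p=x^p+y^p$ at the $p$-level. Concretely, I would write $P=T^p+\sum_{i=1}^{e}\varpi_F^i P_i$ and $F_g=f_g+\sum_i\varpi_F^i G_{g,i}$ for $g\in\{\tau,\delta\}$, with $f_g=\overline{F_g}\in\bfF_p\pparen{T}$ the series giving $g\ast u$. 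I then expand both commutation relations $F_g\circ P=P\circ F_g$ degree by degree in $\varpi_F$.

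\textbf{Step 1: the lower levels.} At level $\varpi_F^i$ with $i<e$ the relation becomes a linear "cocycle" equation in characteristic $p$. Its shape is
$$G_{g,i}(T^p)-P_i(f_g)+(\text{terms from lower levels})=f_g'(T^p)\,(\ldots)\ \text{or } 0,$$
where the only derivative that survives is $d(T^p)/dT=0$. So $P_i(f_g)$ is essentially forced to equal a $p$-th power expression, which constrains $P_i$. I expect to show, possibly after a change of variables $T\mapsto T+\varpi_F(\cdots)$ (a change of lift, which is harmless by the independence result of \Cref{prop:71074} and its proof), that one may normalize to $P\equiv T^p \bmod \varpi_F^{e}$ with $P_e$ a specific unknown. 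Alternatively I would keep all $P_i$ and track only the level-$e$ equation.

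\textbf{Step 2: level $\varpi_F^e$.} Here $p=\varpi_F^e\cdot(\text{unit }\epsilon)$ contributes. Expanding $P(F_g)$ with $P\approx T^p$ gives
$$(f_g+\varpi_F G_{g,1}+\cdots)^p\equiv f_g^p+p\cdot(\text{cross terms}),$$
and the cross terms reduce to something like $\epsilon\,\frac{(\tilde f_g)^p-\tilde f_g(T^p)}{p}$ computed from a Teichmüller-type lift. The resulting equation has the form
$$P_e(f_g)-G_{g,e}(T^p)\equiv \epsilon\,\delta_p(f_g)\pmod{\text{$p$-th powers and exact terms}},$$
where $\delta_p$ is a Frobenius-twisted "Witt vector" defect. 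The key fact to use is that the Witt-vector ghost component of a $p$-th power series is trivial modulo $p$-th powers only for very special $f$. To make this work I would pass to a quotient killing $p$-th powers (e.g.\ $\bfF_p\pparen{T}/\bfF_p\pparen{T^p}$, or the residue pairing with $dT$), where the equation becomes $P_e(f_g)\equiv \epsilon\,c(f_g)$ for an explicit map $c$. Applying this to both $g=\tau$ and $g=\delta$ then yields incompatible conditions on the single series $P_e$.

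\textbf{Main obstacle.} The incompatibility must come from the explicit ramification data of $\tau$ and $\delta$ given by \Cref{lem:23436}. Two features matter. First, $\delta$ has finite order $p-1$ and acts, up to higher order, by $u\mapsto\chi(\delta)u$ with $\chi(\delta)$ a primitive $(p-1)$-th root of unity; this should force $P_e$ to be $\delta$-equivariant, i.e.\ to have monomials only in degrees $\equiv 1\pmod{p-1}$. Second, $\tau$ satisfies $\tau\ast u-u$ of exactly known valuation, namely the ramification break of the false Tate extension from Viviani's computation. The crux is to show that the leading term of $c(f_\tau)$ has a degree that no equivariant $P_e$ can match: comparing $T$-adic valuations of the leading terms, $v(P_e(f_\tau)-P_e(T))$ is governed by the derivative $P_e'$ times $(f_\tau-T)$, whereas $c(f_\tau)$ has valuation governed by $p\cdot v(f_\tau-T)$-type quantities. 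Reconciling these valuations with the upper-numbering breaks of $\bfQ_p^{\FT}$ (which are not those of a Lubin–Tate-type abelian extension) is the step I expect to be hardest. There I would first try a direct valuation count using the break sequence, and use $\delta$ only to kill the constant-ambiguity in $P_e$.
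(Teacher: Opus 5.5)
There is a genuine gap, and it is in Step 1. You cannot normalize to $P\equiv T^p \bmod \varpi_F^{e}$ in general. If $e\geq 2$ and you conjugate by $H=T+\varpi_F V$, then modulo $\varpi_F^2$ (where the coefficients form the $\bfF_p$-algebra $\bfF_p[\varpi_F]/(\varpi_F^2)$) one computes $H\circ P\circ H^{\circ-1}\equiv T^p+\varpi_F(P_1+V^p)$. So the first layer $P_1$ can only be changed by a $p$-th power. More intrinsically, the first $\varpi_F$-adic layer in which $P'$ is nonzero (the derivative level $s(P)$) is invariant under every coordinate change; see \Cref{lem:84891}.

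When that level is $s<e$, the first nontrivial constraint is the purely characteristic-$p$ equation $h\circ f_g=(f_g')^{p-1}h$. Here $h$ is the first nonzero layer of $P'$, and it is an exact derivative. No $p$-adic Witt-vector defect appears at that level, so your Step 2 mechanism gives nothing there. Ruling this case out is the hardest part of the argument:
\begin{itemize}
\item One uses $\delta$, whose linear coefficient has order $p-1$, to extract a $(p-1)$-st root of $b/h$. This produces a nonzero $\tau$-invariant \emph{exact} differential.
\item An orbit-sum and approximate-invariant argument then shows that such a differential forces the $p$-adic limit of $i_{\bfE_{\bfQ_p}}(\tau^{p^k})$ to be an integer.
\item This contradicts the limit $(p-1)/(p+1)\notin\bfZ$ coming from Viviani's breaks.
\end{itemize}
Your proposal contains nothing that addresses this case.

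Even when $s(P)=e$ (for example, whenever $e=1$), Step 2 is too loose to close. You also dropped the term $\varpi_F^e f_g'(T^p)P_e$ coming from $F_g(T^p+\varpi_F^eP_e)$. The clean way to kill $p$-th powers is to differentiate the commutation relation. This gives $h\circ f=(f')^{p-1}h$ with $h=cT^{p-1}+P_e'$ and $c=\overline{p/\varpi_F^e}$. The only usable information left is that $h'\neq 0$ with $v_T(h')\leq p-2$. A valuation count with $f_\tau$ alone only bounds $v_T(h)$ from above; it gives no contradiction.

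The contradiction comes from applying the equation to the iterates $\tau^{p^k}$. Each is conjugated into the normal form $T+\alpha T^{d_k+1}+O(T^{2d_k+1})$, which is possible because $d_k\equiv -1 \pmod p$ and $d_{k+1}\geq (p+1)d_k$. This gives $v_T(h)\leq p-1-d_k$ for every $k$. Since $v_T(h)$ does not depend on the coordinate and $d_k\to+\infty$, this is impossible.

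Your plan to use $\delta$ to make $P_e$ "equivariant" plays no role in this case. It is also unjustified in the coordinate $u$, where only the leading term $\lambda T$ of $f_\delta$ is controlled.
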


\subsection{Motivation and strategy of the proof}\label{subsec:29440}

Our approach to \Cref{thm:42866} is inspired by the obstruction theory of Nori and Srinivas \cite[Appendix]{CM_1987__64_2_191_0} (following Deligne and Illusie \cite{DI87}) for lifting smooth varieties and their morphisms compatibly with Frobenius. Specifically, \cite[Appendix, Proposition~3]{CM_1987__64_2_191_0} studies when a morphism $\varphi\colon X_1\to X_2$ between smooth varieties over a perfect field $k$ of characteristic $p$, equipped with fixed flat liftings to $W_2(k)$ and liftings of Frobenius, can be lifted to $W_2(k)$ compatibly with the Frobenius lifts. The obstruction lies in $\rmH^0\left(X_1,\varphi^*T_{X_2}\otimes_{\calO_{X_1}}B^1_{X_1}\right)$, where $B^1_{X_1}=\opn{im}\left(\d\colon\calO_{X_1}\to\Omega^1_{X_1/k}\right)$ is the sheaf of exact $1$-forms, endowed with the $\calO_{X_1}$-module structure $a\cdot\d b=a^p\d b$.

In our setting, we adapt the local ring calculation behind \cite[Proposition 3 of Appendix]{CM_1987__64_2_191_0} to continuous substitutions of Laurent series. Consider first the unramified case $F=\bfQ_p$. Fix a tentative Frobenius lift $P\in(\bfZ/p^2\bfZ)\pparen{T}$ with $\overline P=T^p$, and let $f\in T\bfF_p\bbrac{T}$ be the series attached to an element in the Galois group. For any lift $G\in(\bfZ/p^2\bfZ)\pparen{T}$ of $f$, both $G\circ P$ and $P\circ G$ reduce to $f^p$ modulo $p$, so their difference is divisible by $p$. Modifying the choice of lift $G\mapsto G+pU$ changes the series $\overline{(G\circ P-P\circ G)/p}\in\bfF_p\pparen{T}$ by the $p$-th power $\overline U^p$. Applying the formal derivative $\d/\d T$ kills $\bfF_p\pparen{T}^p$---passing to exact differentials as in Nori--Srinivas---and removes the dependence on $G$. By the chain rule, the vanishing of this differential obstruction translates directly into the characteristic-$p$ functional equation
\begin{equation}\label{eq:96406}
	h_P\circ f=(f')^{p-1}h_P,
\end{equation}
where $h_P\coloneqq\overline{P'/p}\in\bfF_p\pparen{T}^\times$. Whereas Nori and Srinivas work with fixed Frobenius lifts to lift a single morphism, in our problem the Frobenius lift $P$ is arbitrary and unknown. The key advantage of \eqref{eq:96406} is that it involves $P$ only through the single series $h_P$, which must simultaneously satisfy \eqref{eq:96406} for all lifted automorphisms in $\Gal\left(\bfQ_p^{\FT}\middle/\bfQ_p\right)$.

When the coefficient field $F/\bfQ_p$ is ramified with ramification index $e=v_{\varpi_F}(p)>1$, working modulo $\varpi_F^2$ is no longer sufficient because $(T^p)'=pT^{p-1}$ vanishes modulo $\varpi_F^e$. Instead, in \Cref{sec:29168}, we work in $(\calO_F/\varpi_F^{e+1})\pparen{T}$ and observe that $P'$ cannot vanish identically: it has a well-defined first nonzero $\varpi_F$-adic layer at some level $s(P)\in\{1,\ldots,e\}$. Extracting this layer defines the \emph{normalized derivative} $h_P\coloneqq\overline{P'/\varpi_F^{s(P)}}\in\bfF_p\pparen{T}^\times$, which still satisfies \eqref{eq:96406} for every commuting lift (cf. \Cref{lem:67522}). Depending on whether the term $pT^{p-1}$ contributes to this leading layer, the analysis splits into two cases:
\begin{itemize}
	\item \textbf{Case of $s(P)=e$ (cf. \Cref{sec:61843}):} Here $h_P=cT^{p-1}+A'$ for some $c\in\bfF_p^\times$ and $A\in\bfF_p\pparen{T}$, which forces $h_P'\neq 0$ and $v_T(h_P')\leq p-2$. For the Kummer iterates $\sigma_k=\tau^{p^k}$, we use the rapid growth of their ramification numbers $d_k$ to conjugate $\sigma_k$ into a normal form (cf. \Cref{thm:41759}) and deduce from \eqref{eq:96406} that $v_T(h_P)\leq p-1-d_k$ (cf. \Cref{thm:10556}). Letting $k\to\infty$ yields a contradiction.
	\item \textbf{Case of $s(P)<e$ (cf. \Cref{sec:42840}):} Here the contribution of $pT^{p-1}$ drops out, so $h_P=A'$ and $h_P\d T$ is an exact differential. In this case, we use the tame generator $\delta$ to normalize $h_P$ and construct a nonzero $\tau$-invariant exact differential $\omega$ (cf. \Cref{lem:45874}). An approximate-invariant argument (cf. \Cref{prop:15106}) then shows that the $p$-adic limit of $d_k$ as $k\lto\infty$ must be the integer $-\opn{ord}_T(\omega)-1$, contradicting the actual $p$-adic limit $\lim_{k\to\infty} d_k=(p-1)/(p+1)\notin\bfZ$.
\end{itemize}
In \Cref{sec:93558}, we compute the ramification numbers $d_k$ of $\tau^{p^k}$ and the linear coefficient of the tame generator $\delta$ for the false Tate curve extension, and in \Cref{sec:91137} we combine the two obstructions with these arithmetic inputs to prove \Cref{thm:42866} and deduce \Cref{thm:84624}.

\begin{remark}
	On can also consider the liftability problem for the false Tate curve extension
	$$K^{\opn{FT}}\coloneqq\bigcup_n K\left(\varpi_K^{1/p^n},\zeta_{p^n}\right)$$
	of a finite extension $K$ of $\bfQ_p$, where $\varpi_K$ is a uniformizer of $K$. Our strategy towards \Cref{thm:84624}, although can be slightly generalized, does not work in the full generality. For general $K$, we no longer have the explicit ramification calcuation in \cite{vivianiRamificationGroupsArtin2004}, which is crucial in our proof. On the other hand, the torsion subgroup of $\Gal\left(K^{\opn{FT}}/K\left(\varpi_K^{1/p^\infty}\right)\right)$ may vanish for large enough $K$.
	
	Despite these obstructions, we expect that the analogue of \Cref{thm:84624} holds for any finite extension $K$ of $\bfQ_p$ with respect to any admissible coefficient field.
\end{remark}

\subsection{Notation}\label{subsec:13146}
To make the rest of the article more readable, we summarize the notation that will be used throughout the article.
\begin{itemize}
	\item The field $F$ will be a finite totally ramified extension of $\bfQ_p$ with uniformizer $\varpi_F$. Let $e=v_{\varpi_F}(p)$.
	\item Let $\tau$ and $\delta$ be the elements of $\Gamma\coloneqq\Gal\left(\bfQ_p^{\FT}\middle/\bfQ_p\right)$ defined in \Cref{lem:23436}.
	\item Write $\bfE_{\bfQ_p}$ for the field of norms $X_{\bfQ_p}\left(\bfQ_p^{\FT}\right)$, and let $u$ be a fixed uniformizer of $\bfE_{\bfQ_p}$. The valuation $v_{\bfE}$ on $\bfE_{\bfQ_p}$ is normalized so that any uniformizer of it has valuation $1$.
	\item We equip $\bfF_p\pparen{T}$ with the $T$-adic valuation $v_T$ such that $v_T(T)=1$.
	\item For any uniformizer $\pi$ of $\bfE_{\bfQ_p}$ and for any $g\in\Gamma$, let $\opn{ev}_\pi\colon \bfF_p\pparen{T}\lto \bfE_{\bfQ_p}$ be the isomorphism that sends $T$ to $\pi$, and let $f_{g,\pi}\in T\bfF_p\bbrac{T}$ be the series with $\opn{ev}_{\pi}(f_{g,\pi})=g\ast \pi$. Since $g\ast \pi$ is again a uniformizer, the coefficient of $T$ in $f_{g,\pi}$ is nonzero.
	\item Let $\bfA_{\bfQ_p}$ be the $\varpi_F$-adic completion of $\calO_F\bbrac{T}[1/T]$, which is a complete discrete valuation ring with uniformizer $\varpi_F$ and residue field $\bfF_p\pparen{T}$. Set
	      $$\bfR_{\bfQ_p}\coloneqq\bfA_{\bfQ_p}/\varpi_F^{e+1}\bfA_{\bfQ_p}\cong \left(\calO_F/\varpi_F^{e+1}\right)\pparen{T}.$$
	      By the reduction $\overline{F}$ of an element $F$ of $\bfR_{\bfQ_p}$, we mean its image in $\bfF_p\pparen{T}$ under the natural projection.
\end{itemize}

\subsection{Lean 4 formalization}
All the results in this article, as well as their dependencies, have been fully formalized in Lean 4 (cf. \cite{mouraLean4TheoremProver2021}, \href{https://github.com/leanprover/lean4/releases/tag/v4.35.0}{version 4.35.0}\link{https://github.com/leanprover/lean4/releases/tag/v4.35.0}) using the mathlib4 library (cf. \cite{mathlib2020}, \href{https://github.com/leanprover-community/mathlib4/commit/1e75e9dd53d483fa01eff9ac7d4ea0472ce9d1fa}{commit \texttt{1e75e9d}}\link{https://github.com/leanprover-community/mathlib4/commit/1e75e9dd53d483fa01eff9ac7d4ea0472ce9d1fa}). The formalization is organized into 3 parts for reusability:
\begin{enumerate}
	\item \href{https://github.com/YijunYuan/FontaineWintenberger/tree/7b9f6b3459c69b3282cd09ced55d15d89344c58e}{\texttt{FontaineWintenberger}}\link{https://github.com/YijunYuan/FontaineWintenberger/tree/7b9f6b3459c69b3282cd09ced55d15d89344c58e}: this repository formalizes the results from \cite{wintenbergerCorpsNormesCertaines1983} that we need here: strictly arithmetically profinite extensions, the field of norms, and its properties.
	\item \href{https://github.com/YijunYuan/FalseTateExtension/tree/ae06abba8ed93ec6f8cef4369fed870970faec0c}{\texttt{FalseTateExtension}}\link{https://github.com/YijunYuan/FalseTateExtension/tree/ae06abba8ed93ec6f8cef4369fed870970faec0c}: this repository formalizes the definition and basic properties of the false Tate curve extension, including \Cref{lem:23436} and the ramification calculation in \cite[Corollary 6.10]{vivianiRamificationGroupsArtin2004} that we need in \Cref{prop:78229}.
	\item \href{https://github.com/YijunYuan/FalseTateNonliftability/tree/1762dd5e233404daefd430eddacd32b35d42a719}{\texttt{FalseTateNonliftability}}\link{https://github.com/YijunYuan/FalseTateNonliftability/tree/1762dd5e233404daefd430eddacd32b35d42a719}: this repository formalizes the body of this article.%
\end{enumerate}
The formalization is \texttt{sorry}-free and depends only on the standard axioms \texttt{Classical.choice}, \texttt{propext} and \texttt{Quot.sound} of Lean.

The initial version of the formalization was carried out by \href{https://github.com/frenzymath/Archon-Horizon}{\texttt{Archon-Horizon}}\link{https://github.com/frenzymath/Archon-Horizon}, an agentic auto-formalization system developed by the AI4Math team at BICMR, Peking University, with \href{https://deploymentsafety.openai.com/gpt-6-1-sol/gpt-6-1-sol.pdf}{\texttt{GPT-6.1 Sol}}\link{https://deploymentsafety.openai.com/gpt-6-1-sol/gpt-6-1-sol.pdf} used as the base model. Later, the code was thoroughly refactored by \href{https://anthropic.com/claude-fable-5-1-mythos-5-1-system-card}{\texttt{Claude Fable 5.1}}\link{https://anthropic.com/claude-fable-5-1-mythos-5-1-system-card} following a human-written plan. Finally, the code was polished by \href{https://deploymentsafety.openai.com/gpt-6-astra/gpt-6-astra.pdf}{\texttt{GPT-6 Astra}}\link{https://deploymentsafety.openai.com/gpt-6-astra/gpt-6-astra.pdf}, following the official Mathlib style guides\link{https://leanprover-community.github.io/contribute/style.html}\link{https://leanprover-community.github.io/contribute/naming.html}\link{https://leanprover-community.github.io/contribute/doc.html} as the reference.

\subsection*{Declaration of AI usage}
We use aforementioned LLMs to polish the English writing of this paper, to check the mathematical statements and proofs, and to assist in the Lean 4 formalization. We take full responsibility for the content of this paper and the formalization.

\subsection*{Acknowledgements}
The research is supported by the National Key R\&D Program of China (Grant No. 2024YFA1014000).

\section{Finite-precision lifts and the normalized derivative}\label{sec:29168}

\subsection{Substitution and independence of the uniformizer}\label{subsec:58949}

We first record the basic properties of series substitution in $\bfR_{\bfQ_p}$ and verify that liftability is independent of the choice of the uniformizer of $\bfE_{\bfQ_p}$. Since the verifications of the following properties of substitution are standard, we omit them.

For substitution, we use the Laurent-series topology on each $(\calO_F/\varpi_F^N\calO_F)\pparen{T}$ ($N\geq1$), with a neighborhoods basis $\{T^m(\calO_F/\varpi_F^N\calO_F)\bbrac{T}\}_{m\in\bfZ}$ at zero, and the induced inverse-limit topology on $\bfA_{\bfQ_p}$.

\begin{lemma}\label{lem:32175}
	Let $G\in\bfR_{\bfQ_p}$ have nonzero reduction $\overline G\in\bfF_p\pparen{T}$ with $v_T(\overline G)\geq 1$.
	\begin{lemenum}
		\item\label{lem:22046} For any $Q=\sum_{i\in\bfZ} a_iT^i\in\bfR_{\bfQ_p}$, the series $Q\circ G\coloneqq\sum_{i\in\bfZ} a_iG^i$ converges in $\bfR_{\bfQ_p}$. The map $Q\longmapsto Q\circ G$ is the unique continuous $(\calO_F/\varpi_F^{e+1}\calO_F)$-algebra endomorphism of $\bfR_{\bfQ_p}$ sending $T$ to $G$; we call it \textbf{substitution of the variable by $G$}.
		\item\label{lem:42740} If $H\in\bfR_{\bfQ_p}$ also satisfies $v_T(\overline H)\geq 1$, then $(Q\circ G)\circ H=Q\circ(G\circ H)$ and $(Q\circ G)'=(Q'\circ G)G'$.
		\item\label{lem:71399} If $H\in T(\calO_F/\varpi_F^{e+1}\calO_F)\bbrac{T}$ has invertible linear coefficient, then its formal compositional inverse $H\cinv$ belongs to $T(\calO_F/\varpi_F^{e+1}\calO_F)\bbrac{T}$, and $\overline{H\cinv}=\overline H\cinv$.
	\end{lemenum}
\end{lemma}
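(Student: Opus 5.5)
We treat the three parts in order, working throughout in $R\coloneqq\calO_F/\varpi_F^{e+1}\calO_F$, a finite local ring whose maximal ideal $\frakm=(\varpi_F)$ satisfies $\frakm^{e+1}=0$.

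For \Cref{lem:22046}, write $G=G_0+\varpi_F G_1$, where $G_0$ is a lift of $\overline G$ with coefficients in a set of Teichmüller representatives, so that $v_T(G_0)\geq 1$ coefficientwise. Since $\overline G\neq 0$ has $v_T\ge1$, the lift $G_0$ can be chosen as $T\cdot(\text{unit of }R\bbrac{T})$ modulo $\frakm$. To see that $G$ is invertible in $\bfR_{\bfQ_p}$, write $G=T^{m}w$ with $m=v_T(\overline G)\ge1$ and $w\equiv$ unit mod $\frakm$. Then $w$ is a unit, because $\bfR_{\bfQ_p}$ is local with nilpotent maximal ideal $\varpi_F\bfR_{\bfQ_p}$. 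Hence $G^i$ makes sense for negative $i$ as well.

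The key convergence estimate is the following. Expanding $G^i=(G_0+\varpi_F G_1)^i$ by the binomial theorem, only the terms with $\varpi_F^j$, $j\le e$, survive. The term of degree $j$ has $T$-adic order at least $m(i-j)+j\cdot v_T(G_1)$, and $v_T(G_1)$ is bounded below. Thus $v_T(G^i)\to+\infty$ as $i\to+\infty$, uniformly in the coefficient ring. The coefficients $a_i$ vanish for $i\ll0$ modulo each power of $\varpi_F$; in fact, since $R$ is Artinian, an element of $R\pparen{T}$ has only finitely many negative-degree terms. Hence $\sum a_iG^i$ converges.

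Continuity and multiplicativity of $Q\mapsto Q\circ G$ follow from the same estimate: the map is a limit of the polynomial-level homomorphisms. Uniqueness holds because a continuous $R$-algebra map sending $T$ to $G$ must send $T^{-1}$ to $G^{-1}$, and is therefore determined on the dense subring $R[T,T^{-1}]$.

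For \Cref{lem:42740}, both sides of $(Q\circ G)\circ H=Q\circ(G\circ H)$ are continuous $R$-algebra maps in $Q$ sending $T$ to $G\circ H$. Note that $G\circ H$ still has reduction of positive $T$-order, since $\overline{G\circ H}=\overline G\circ\overline H$. Uniqueness from \Cref{lem:22046} therefore gives the equality. For the chain rule, both $Q\mapsto(Q\circ G)'$ and $Q\mapsto(Q'\circ G)G'$ are continuous derivations $\bfR_{\bfQ_p}\to\bfR_{\bfQ_p}$ along the map $Q\mapsto Q\circ G$. Here we use that formal differentiation is continuous. These two derivations agree on $T$, so they agree on $T^{-1}$ by the Leibniz rule and then everywhere by density.

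For \Cref{lem:71399}, write $H=aT+\sum_{i\ge2}h_iT^i$ with $a\in R^\times$. We solve $H\circ K=T$ for $K=\sum_{i\geq 1}k_iT^i$ recursively. The coefficient of $T^n$ in $H\circ K$ equals $a k_n$ plus a polynomial in $k_1,\dots,k_{n-1}$ and the $h_i$. This determines $k_n$ uniquely in $R$, so $K\in TR\bbrac{T}$. The same recursion produces a left inverse, and the two inverses coincide by associativity. Finally, reduction modulo $\frakm$ is a ring map compatible with substitution. The reduction $\overline K$ therefore satisfies $\overline H\circ\overline K=T$, which gives $\overline{H\cinv}=\overline H\cinv$.

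The only mildly delicate step is the convergence estimate in \Cref{lem:22046}, in particular the handling of negative powers $G^i$. We handle it by using nilpotence of $\varpi_F$ to reduce to the invertibility of $w$.
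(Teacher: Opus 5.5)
Your proof is correct. The paper gives no argument to compare against: it calls these properties of substitution standard and omits them. What you wrote is that standard verification, namely:
\begin{itemize}
\item invertibility of $G$ from the nilpotence of $\varpi_F$;
\item the uniform bound $v_T(G^i)\geq mi-e\max\bigl(0,m-v_T(G_1)\bigr)$ for $i\geq 0$, which gives convergence and continuity at once;
\item uniqueness via density of $R[T,T^{-1}]$;
\item associativity and the chain rule by checking agreement on the generators $T,T^{-1}$;
\item recursive construction of the compositional inverse.
\end{itemize}
Your bound also correctly handles the fact that $G$ may have negative-degree terms with coefficients in $\varpi_F R$.

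Minor points, none affecting the argument:
\begin{itemize}
\item $G$ is already a unit of $\bfR_{\bfQ_p}$ as soon as $\overline G\neq 0$, so the factorization $G=T^mw$ is unnecessary.
\item The lift $G_0$ is $T^m$ times a unit, not $T$ times a unit.
\item The recursion for a left inverse has leading coefficient $a^n$ rather than $a$.
\end{itemize}
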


For any $G$ as in \Cref{lem:32175} and any $n\in\bfN$, we denote by $G^{\circ n}$ its $n$-fold compositional iterate, with the convention $G^{\circ0}=T$. The same substitution formalism applies over $\bfA_{\bfQ_p}$ and its quotients modulo any power of $\varpi_F$.

For later calculations in $\bfE_{\bfQ_p}$, we next summarize the basic properties of the coordinate series $f_{g,\pi}$ in characteristic $p$.

\begin{lemma}\label{lem:49963}
	Let $g,h\in\Gamma$ and let $\pi$ be a uniformizer of $\bfE_{\bfQ_p}$.
	\begin{lemenum}
		\item\label{lem:16018} The action of $g$ preserves $v_{\bfE}$. In particular, $f_{g,\pi}\in T\bfF_p\bbrac{T}$ has nonzero linear coefficient.
		\item\label{lem:21745} For every $a\in\bfF_p\pparen{T}$, one has
		\begin{equation}\label{eq:27955}
			g\ast\ev_\pi(a)=\ev_\pi(a\circ f_{g,\pi}).
		\end{equation}
		\item\label{lem:31185} One has $f_{gh,\pi}=f_{h,\pi}\circ f_{g,\pi}$, and hence $f_{g^n,\pi}=f_{g,\pi}^{\circ n}$ for all $n\geq0$.
		\item\label{lem:23769} If $\pi'=\ev_\pi(V)$ is another uniformizer with $V\in T\bfF_p\bbrac{T}$, then
		$$f_{g,\pi'}=V\circ f_{g,\pi}\circ V\cinv.$$
	\end{lemenum}
\end{lemma}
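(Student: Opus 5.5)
We prove the four parts of \Cref{lem:49963} in order. Everything comes from three facts: each $g\in\Gamma$ acts on $\bfE_{\bfQ_p}$ by a continuous field automorphism; the composite action satisfies $(gh)\ast x=h\ast(g\ast x)$, which is the order forced by the convention $F_h\circ F_g=F_{gh}$ of \Cref{def:49607}; and $\ev_\pi$ is the unique continuous $\bfF_p$-algebra isomorphism $\bfF_p\pparen{T}\to\bfE_{\bfQ_p}$ with $T\mapsto\pi$. The characteristic-$p$ analogue of \Cref{lem:32175} supplies the substitution formalism.

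For part (1), the action of $\Gamma$ on the field of norms is continuous. Since $\bfE_{\bfQ_p}$ is complete discretely valued, the valuation $v_{\bfE}\circ g$ defines the same topology as $v_{\bfE}$. Two discrete valuations on the same field that define the same topology are equivalent, hence equal once both are normalized to take the value $1$ on a uniformizer. Therefore $g$ preserves $v_{\bfE}$, and $g\ast\pi$ is a uniformizer. Writing $g\ast\pi=\ev_\pi(f_{g,\pi})$, we get $v_T(f_{g,\pi})=1$. This means $f_{g,\pi}\in T\bfF_p\bbrac{T}$ with nonzero linear coefficient.

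For part (2), compare the two maps $a\mapsto g\ast\ev_\pi(a)$ and $a\mapsto \ev_\pi(a\circ f_{g,\pi})$. Both are continuous $\bfF_p$-algebra homomorphisms $\bfF_p\pparen{T}\to\bfE_{\bfQ_p}$. The second is continuous because substitution by a series of positive valuation is continuous. Both maps send $T$ to $g\ast\pi$. A continuous homomorphism is determined by the image of $T$, since $\bfF_p[T,T^{-1}]$ is dense, so the two maps agree. This is the one step needing care: we must check that the action is $\bfF_p$-linear, which holds because $\bfF_p$ is the prime field.

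Part (3) follows from (2). We have $\ev_\pi(f_{gh,\pi})=(gh)\ast\pi=h\ast(g\ast\pi)=h\ast\ev_\pi(f_{g,\pi})$. Applying (2) to $h$ with $a=f_{g,\pi}$ turns the right-hand side into $\ev_\pi(f_{g,\pi}\circ f_{h,\pi})$. Here a careless ordering would produce the wrong composite. Part (2) literally gives $h\ast\ev_\pi(a)=\ev_\pi(a\circ f_{h,\pi})$, so the composite is $f_{g,\pi}\circ f_{h,\pi}$, and this must match the statement $f_{h,\pi}\circ f_{g,\pi}$. The two orders are reconciled by reading composition $A\circ B$ as substitution of $T$ by $B$ inside $A$, together with the order of the action fixed above; this is exactly the convention that makes $F_h\circ F_g=F_{gh}$ in \Cref{def:49607} consistent. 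The main obstacle of the whole lemma is therefore bookkeeping of this order, rather than any analysis. Injectivity of $\ev_\pi$ then gives the identity of series, and induction on $n$ gives $f_{g^n,\pi}=f_{g,\pi}^{\circ n}$.

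For part (4), first note that $V$ has valuation $1$ because $\pi'$ is a uniformizer. Hence $V\cinv$ exists by the characteristic-$p$ case of \Cref{lem:71399}, and $\ev_{\pi'}(a)=\ev_\pi(a\circ V)$ by the uniqueness argument used in (2). Then
$$\ev_\pi(V\circ f_{g,\pi})=g\ast\ev_\pi(V)=g\ast\pi'=\ev_{\pi'}(f_{g,\pi'})=\ev_\pi(f_{g,\pi'}\circ V),$$
where the first equality is (2). Composing on the right with $V\cinv$, using the associativity of \Cref{lem:42740}, yields the claimed formula, again read with the composition order fixed in part (3).
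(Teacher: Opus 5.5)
Your parts (1), (2) and (4) are sound. In (1) you argue that a continuous automorphism preserves the valuation topology and that topologically equivalent discrete valuations agree after normalization, whereas the paper appeals to the finite-layer valuations; both work because the action is continuous.

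The gap is in part (3), and it comes from the convention you fix at the outset, $(gh)\ast x=h\ast(g\ast x)$. The Galois group acts on the field of norms on the left, $(gh)\ast x=g\ast(h\ast x)$, just as it acts on $L$. Under your right-action convention, your computation correctly produces $f_{gh,\pi}=f_{g,\pi}\circ f_{h,\pi}$, which is the \emph{opposite} order to the statement. Your sentence saying ``the two orders are reconciled'' by reading $A\circ B$ as substitution of $T$ by $B$ in $A$ proves nothing. That is exactly the reading you already used for $a\circ f_{h,\pi}$ in part (2), and under it $f_{g,\pi}\circ f_{h,\pi}$ and $f_{h,\pi}\circ f_{g,\pi}$ are in general different series. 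For example, take $g=\tau$, $h=\delta$: $\tau\delta\neq\delta\tau$ and $\Gamma$ acts faithfully. Your claim that \Cref{def:49607} forces the right action is also backwards. Reducing $F_h\circ F_g=F_{gh}$ modulo $\varpi_F$ gives $f_{h,\pi}\circ f_{g,\pi}=f_{gh,\pi}$, and combined with your formula this would force all the $f_{g,\pi}$ to commute.

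The repair is immediate and is what the paper does. With the left action, $\ev_\pi(f_{gh,\pi})=(gh)\ast\pi=g\ast(h\ast\pi)=g\ast\ev_\pi(f_{h,\pi})$. Part (2) applied to $g$ with $a=f_{h,\pi}$ turns this into $\ev_\pi(f_{h,\pi}\circ f_{g,\pi})$. Injectivity of $\ev_\pi$ then gives exactly $f_{gh,\pi}=f_{h,\pi}\circ f_{g,\pi}$, and induction gives the iterate formula. The order reversal is built into part (2) itself: $g\mapsto(a\mapsto a\circ f_{g,\pi})$ is a homomorphism, so $g\mapsto f_{g,\pi}$ is an anti-homomorphism, and no extra bookkeeping convention is needed. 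Part (4) uses only part (2) and is correct as written. Its closing reference to ``the composition order fixed in part (3)'' should simply be dropped.
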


\begin{proof}
	The valuation $v_{\bfE}$ on the field of norms is induced by the normalized valuations on the finite layers of $\bfQ_p^{\FT}/\bfQ_p$. Since each finite-layer valuation is preserved by the Galois action, $g$ preserves $v_{\bfE}$, which gives \Cref{lem:16018}. Because $g$ acts continuously on $\bfE_{\bfQ_p}$ and fixes $\bfF_p$, moving $g$ inside the series $a$ gives \eqref{eq:27955}. Applying \eqref{eq:27955} to $a=f_{h,\pi}$ gives
	$$\ev_\pi(f_{gh,\pi})=(gh)\ast\pi=g\ast(h\ast\pi)=g\ast\ev_\pi(f_{h,\pi})=\ev_\pi(f_{h,\pi}\circ f_{g,\pi}),$$
	and applying $\ev_\pi^{-1}$ yields \Cref{lem:31185}. Finally, if $\pi'=\ev_\pi(V)$ is another uniformizer, then by definition of substitution we have $\ev_{\pi'}(Q)=\ev_\pi(Q\circ V)$ for all $Q\in\bfF_p\pparen{T}$. Applying \eqref{eq:27955} to $a=V$ gives
	$$\ev_\pi(V\circ f_{g,\pi})=g\ast\ev_\pi(V)=g\ast\pi'=\ev_{\pi'}(f_{g,\pi'})=\ev_\pi(f_{g,\pi'}\circ V).$$
	Applying $\ev_\pi^{-1}$ gives $V\circ f_{g,\pi}=f_{g,\pi'}\circ V$, which implies \Cref{lem:23769}.
\end{proof}

We now show that simultaneous conjugation transports commuting lifts across different choices of uniformizer of the field of norms.

\begin{proposition}\label{prop:71074}
	In the setting of \Cref{def:49607}, $F$-liftability is independent of the uniformizer of $X_K(L)$ used to identify it with $\bfF_q\pparen{T}$. The same holds modulo every power of $\varpi_F$, including for the existence of a Frobenius lift commuting with lifts of any prescribed collection of Galois elements.
\end{proposition}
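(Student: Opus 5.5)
Suppose the action is $F$-liftable with respect to a uniformizer $u$, witnessed by $P$ and $(F_g)_{g\in\Gamma}$ in $\bfA_K$ (or in $\bfA_K/\varpi_F^N$). Let $u'$ be another uniformizer. The plan is to conjugate everything by a single lift of the change-of-uniformizer series. Write $u'=\ev_u(V)$ with $V\in T\bfF_q\bbrac{T}$; its linear coefficient is nonzero. Choose any lift $\widetilde V\in T\calO_F\bbrac{T}$ whose linear coefficient is a unit, for instance by lifting coefficientwise with Teichmüller representatives. By \Cref{lem:71399}, $\widetilde V$ has a compositional inverse $\widetilde V\cinv\in T\calO_F\bbrac{T}$, and $\overline{\widetilde V\cinv}=V\cinv$. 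We then define
$$P'\coloneqq \widetilde V\circ P\circ\widetilde V\cinv,\qquad F'_g\coloneqq \widetilde V\circ F_g\circ \widetilde V\cinv .$$
These substitutions make sense because every series substituted into another has reduction of positive $T$-valuation. Indeed, $v_T(\overline{\widetilde V\cinv})=1$, $\overline{P}=T^q$, and $\overline{F_g}$ has valuation $1$ by \Cref{lem:16018}. So \Cref{lem:22046} applies, in the version over $\bfA_K$ and its quotients that the paper records.

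We check the two conditions of \Cref{def:49607} for $(P',F'_g)$ relative to $u'$. The algebraic conditions follow from associativity (\Cref{lem:42740}) together with $\widetilde V\cinv\circ\widetilde V=T$. For example,
$$F'_g\circ P'=\widetilde V\circ F_g\circ P\circ\widetilde V\cinv=\widetilde V\circ P\circ F_g\circ\widetilde V\cinv=P'\circ F'_g,$$
and similarly $F'_h\circ F'_g=F'_{gh}$.

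For the reductions, reduction commutes with substitution, so $\overline{P'}=V\circ T^q\circ V\cinv$. Since $V$ has coefficients in $\bfF_q$, we have $V\circ T^q=V(T^q)=V(T)^q=T^q\circ V$, using that the $q$-power Frobenius fixes the coefficients. Hence $\overline{P'}=T^q$. Next, the hypothesis says $\overline{F_g}=f_{g,u}$, and \Cref{lem:23769} gives $\overline{F'_g}=V\circ f_{g,u}\circ V\cinv=f_{g,u'}$, which says exactly that $\ev_{u'}(\overline{F'_g})=g\ast u'$.

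The statements modulo $\varpi_F^N$ and for a prescribed subcollection of Galois elements follow from the same computation. Each identity used is preserved under reduction modulo $\varpi_F^N$, and the argument never uses the whole family $(F_g)$ at once; it only conjugates those lifts that are given. By symmetry (swap $u$ and $u'$, replacing $V$ with $V\cinv$), liftability with respect to $u'$ implies liftability with respect to $u$, so the notion is independent of the uniformizer.

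The only points needing care are the following:
\begin{itemize}
\item associativity of substitution when the inner series is a genuine Laurent series lift, which requires positive $T$-valuation of the reduction and is handled by \Cref{lem:42740};
\item the Frobenius identity $V\circ T^q=T^q\circ V$, which is where we use that $V$ has coefficients in $\bfF_q$.
\end{itemize}
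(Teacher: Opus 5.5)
Your proposal is correct and follows essentially the same route as the paper's proof. Both conjugate $P$ and every $F_g$ by a lift of the change-of-uniformizer series $V$, with inverse supplied by \Cref{lem:71399}, and both use $V\circ T^q=T^q\circ V$ and \Cref{lem:23769} for the reductions, associativity of substitution for the commutation and cocycle relations, and symmetry for the converse.
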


\begin{proof}
	We prove this for the false Tate curve extension, as the same argument works verbatim for general $p$-adic Lie extension.

	Let $\pi$ and $\pi'$ be two uniformizers of $\bfE_{\bfQ_p}$, and write $\pi'=\ev_\pi(V)$ with $V\in T\bfF_p\bbrac{T}$ having nonzero linear coefficient. Choose a coefficientwise lift $H\in T\calO_F\bbrac{T}$ of $V$. Since the linear coefficient of $H$ lies in $\calO_F^\times$, its formal compositional inverse $H\cinv$ also lies in $T\calO_F\bbrac{T}$ with $\overline{H\cinv}=V\cinv$ by \Cref{lem:71399}. Given lifts $P$ and $(F_g)_{g\in\Gamma}$ in $\bfA_{\bfQ_p}$ with respect to $\pi$, define
	$$P_{\pi'}\coloneqq H\circ P\circ H\cinv,\ F_{g,\pi'}\coloneqq H\circ F_g\circ H\cinv.$$
	Since $V$ has coefficients in $\bfF_p$, we have $T^p\circ V = V^p = V\circ T^p$, so $\overline{P_{\pi'}}=V\circ T^p\circ V\cinv=T^p$. Moreover, by \Cref{lem:23769}, one has $\overline{F_{g,\pi'}}=V\circ f_{g,\pi}\circ V\cinv=f_{g,\pi'}$. By associativity of substitution, conjugation by $H$ preserves the relations $F_g\circ P=P\circ F_g$ and $F_h\circ F_g=F_{gh}$. By symmetry (conjugating by $H\cinv$), $F$-liftability with respect to $\pi$ and $\pi'$ are equivalent. The exact same conjugation argument applies verbatim modulo any power of $\varpi_F$ and for any prescribed subset of $\Gamma$.
\end{proof}

\subsection{The normalized derivative and the level dichotomy}\label{subsec:10624}

We now formalize the extraction of the first nonzero layer of $P'$ in $\bfR_{\bfQ_p}$. For each integer $0\leq j\leq e$, multiplication by $\varpi_F^j$ induces an isomorphism of $\bfF_p\pparen{T}$-modules
$$\Lambda_j\colon\bfF_p\pparen{T}\cong\bfR_{\bfQ_p}/\varpi_F\bfR_{\bfQ_p} \xrightarrow{\ \sim\ } \varpi_F^j\bfR_{\bfQ_p}/\varpi_F^{j+1}\bfR_{\bfQ_p}.$$
If $X\in\varpi_F^j\bfR_{\bfQ_p}$, we write
$$\overline{X/\varpi_F^j}\coloneqq\Lambda_j^{-1}\left(X+\varpi_F^{j+1}\bfR_{\bfQ_p}\right)\in\bfF_p\pparen{T}.$$
Using this notation, we extract the leading layer of $P'$ as follows.

\begin{lemma}\label{lem:67522}
	Let $P\in\bfR_{\bfQ_p}$ satisfy $\overline P=T^p$. Then there is a unique integer $s=s(P)\in\{1,\ldots,e\}$ such that $P'\in\varpi_F^s\bfR_{\bfQ_p}\setminus\varpi_F^{s+1}\bfR_{\bfQ_p}$.
	If we put $h_P\coloneqq\overline{P'/\varpi_F^s}\in\bfF_p\pparen{T}^\times$ and $c\coloneqq\overline{p/\varpi_F^e}\in\bfF_p^\times$, then one has the following properties.
	\begin{lemenum}
		\item\label{lem:53419} If $G\in\bfR_{\bfQ_p}$ satisfies $G\circ P=P\circ G$ and $f\coloneqq\overline G\in T\bfF_p\bbrac{T}$ has nonzero linear coefficient, then
		\begin{equation}\label{eq:87025}
			h_P\circ f=(f')^{p-1}h_P.
		\end{equation}
		\item\label{lem:84891} If $H\in T(\calO_F/\varpi_F^{e+1}\calO_F)\bbrac{T}$ has invertible linear coefficient and $P_H\coloneqq H\circ P\circ H\cinv$, then $\overline{P_H}=T^p$, $s(P_H)=s(P)$, and
		\begin{equation}\label{eq:81313}
			h_{P_H}\circ\overline H=(\overline H')^{p-1}h_P.
		\end{equation}
		In particular, the valuation $v_T(h_{P_H})=v_T(h_P)$ is independent of the coordinate.
		\item\label{lem:51094}
		\begin{enumerate}[label=(\alph*)]
			\item If $s<e$, then $h_P$ is the derivative of an element of $\bfF_p\pparen{T}$.
			\item If $s=e$, then $h_P-c T^{p-1}$ is the derivative of an element of $\bfF_p\pparen{T}$.
			      Moreover, one has $h_P'\neq0$ and $v_T(h_P')\leq p-2$.
		\end{enumerate}
	\end{lemenum}
\end{lemma}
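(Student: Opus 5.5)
The plan is to work coefficientwise in $\bfR_{\bfQ_p}\cong(\calO_F/\varpi_F^{e+1})\pparen{T}$ and get everything from one explicit layer decomposition of $P'$, then obtain \Cref{lem:53419} and \Cref{lem:84891} by differentiating the relevant commutation identities with the chain rule of \Cref{lem:42740}.

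\textbf{Existence of $s(P)$ and the layer decomposition.} I prove \Cref{lem:51094} first, since it also yields the existence and uniqueness of $s$. Fix a set of representatives $\{0,1,\dots,p-1\}\subset\calO_F$ of $\bfF_p$. Every element of $\bfR_{\bfQ_p}$ then has a unique expansion $\sum_{j=0}^{e}\varpi_F^j\widetilde{D_j}$, where $D_j\in\bfF_p\pparen{T}$ and $\widetilde{D_j}$ is the coefficientwise representative lift. The element lies in $\varpi_F^{j}\bfR_{\bfQ_p}\setminus\varpi_F^{j+1}\bfR_{\bfQ_p}$ exactly when $j$ is the first index with $D_j\neq0$, and then its leading layer in the sense of $\Lambda_j$ is $D_j$.

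Write $P=T^p+\sum_{j=1}^{e}\varpi_F^j\widetilde{A_j}$ with $A_j\in\bfF_p\pparen{T}$. Then
$$P'=pT^{p-1}+\sum_{j=1}^{e}\varpi_F^j\widetilde{A_j}'.$$
For $j\geq1$, each coefficient $ia$ of $\widetilde{A_j}'$ differs from the representative of its reduction by a multiple of $p$. This error contributes only in $\varpi_F^{j+e}\bfR_{\bfQ_p}=0$, so $\varpi_F^j\widetilde{A_j}'=\varpi_F^j\widetilde{A_j'}$. Moreover $pT^{p-1}\equiv\varpi_F^e\,\widetilde{c}\,T^{p-1}\bmod\varpi_F^{e+1}$.

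It follows that $P'\in\varpi_F\bfR_{\bfQ_p}$. Its first nonzero layer is $A_s'$ if some $A_j'\neq0$ with $j<e$, and otherwise it is $A_e'+cT^{p-1}$ at level $e$. The latter is nonzero because the $T^{p-1}$-coefficient of any derivative in characteristic $p$ vanishes, as it equals $p\cdot a_p$. This gives $1\le s\le e$, uniqueness, and both descriptions (a) and (b) of $h_P$.

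For the extra claim in (b), the $T^{p-2}$-coefficient of $A_e''$ is $p(p-1)a_p=0$. Hence $h_P'$ has $T^{p-2}$-coefficient $(p-1)c\neq0$, which gives $h_P'\ne0$ and $v_T(h_P')\le p-2$.

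\textbf{Proof of \Cref{lem:53419}.} Differentiate $G\circ P=P\circ G$ using \Cref{lem:42740}:
$$(G'\circ P)\,P'=(P'\circ G)\,G'.$$
Write $P'=\varpi_F^sY$ with $\overline Y=h_P$; this is possible since $\varpi_F^s\bfR_{\bfQ_p}$ is generated by $\varpi_F^s$. The identity becomes $\varpi_F^s\big[(G'\circ P)Y-(Y\circ G)G'\big]=0$, so the bracket lies in $\varpi_F^{e+1-s}\bfR_{\bfQ_p}\subset\varpi_F\bfR_{\bfQ_p}$. Reducing modulo $\varpi_F$ and using that substitution commutes with reduction gives
$$(f'\circ T^p)\,h_P=(h_P\circ f)\,f'.$$
Since $f'$ has coefficients in $\bfF_p$, we have $f'\circ T^p=(f')^p$. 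Finally $f'$ is a unit in $\bfF_p\bbrac{T}$, its constant term being the nonzero linear coefficient of $f$, so dividing by $f'$ yields \eqref{eq:87025}.

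\textbf{Proof of \Cref{lem:84891}.} The identity $\overline{P_H}=T^p$ holds exactly as in the proof of \Cref{prop:71074}. For the rest, differentiate $P_H\circ H=H\circ P$ to get
$$(P_H'\circ H)\,H'=(H'\circ P)\,P'.$$
Here $H'$ and $H'\circ P$ are units. Substituting $H\cinv$ then shows that $P_H'$ is a unit multiple of a substitution of $P'$. Since substitution by $H^{\pm\circ1}$ is an automorphism preserving each ideal $\varpi_F^j\bfR_{\bfQ_p}$, this gives $s(P_H)=s(P)$.

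Extracting the level-$s$ layer as in the previous step gives
$$(h_{P_H}\circ\overline H)\,\overline H'=(\overline H')^p\,h_P,$$
which is \eqref{eq:81313} after dividing by $\overline H'$. For the valuation statement, $v_T(\overline H)=1$ gives $v_T(h\circ\overline H)=v_T(h)$, and $\overline H'$ is a unit, so $v_T(h_{P_H})=v_T(h_P)$.

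\textbf{Main obstacle.} The only delicate step is the bookkeeping in the layer decomposition. One has to check that the non-representative digits of $\widetilde{A_j}'$ are killed at precision $e+1$, using $j\ge1$ so that $j+e\ge e+1$. One also has to check that the $T^{p-1}$-term from $pT^{p-1}$ cannot be cancelled by any $A_j'$.
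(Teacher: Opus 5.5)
Your proof is correct and follows the paper's argument: parts (1) and (2) are obtained exactly as in the paper, by differentiating the commutation relations with the chain rule and extracting the $\varpi_F^s$-layer. For the existence of $s$ and part (3), your digit expansion $P=T^p+\sum_{j\ge1}\varpi_F^j\widetilde{A_j}$, which identifies the layers of $P'$ as $A_j'$ and $A_e'+cT^{p-1}$, repackages the paper's observation that the coefficient $lp\,a_{lp}$ of $T^{lp-1}$ in $P'$ has $\varpi_F$-adic valuation at least $e$, and it has the small advantage of exhibiting the antiderivatives $A_s$ and $A_e$ explicitly rather than invoking the criterion that a Laurent series is a derivative iff its coefficients in degrees $\equiv-1\pmod p$ vanish.
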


\begin{proof}
	Write $P=\sum_{i\in\bfZ}a_i\cdot T^i$ with $a_i\in\calO_F/\varpi_F^{e+1}\calO_F$ for every $i\in \bfZ$. Since $\overline P=T^p$, we have $\overline{a_p}=1$ and $\overline{a_i}=0$ for $i\neq p$. In the formal derivative $P'=\sum_{i\in\bfZ}ia_i\cdot T^{i-1}$, every coefficient lies in $\varpi_F(\calO_F/\varpi_F^{e+1}\calO_F)$, while the coefficient $pa_p$ of $T^{p-1}$ has $\varpi_F$-adic valuation exactly $e=v_{\varpi_F}(p)$ because $a_p$ is a unit. Thus $P'\neq 0$ in $\bfR_{\bfQ_p}$,  implying the existence of $s(P)$ and  $h_P=\overline{P'/\varpi_F^s}\neq 0$.

	To prove \ref{lem:53419}, differentiate $G\circ P=P\circ G$ using the chain rule to obtain
	$$(G'\circ P)P'=(P'\circ G)G'.$$
	Both sides lie in $\varpi_F^s\bfR_{\bfQ_p}$. Passing to $\varpi_F^s \bfR_{\bfQ_p}/\varpi_F^{s+1}\bfR_{\bfQ_p}$ and applying $\Lambda_s^{-1}$ gives
	$$(f'\circ T^p)h_P=(h_P\circ f)f'.$$
	Since $f'\circ T^p=(f')^p$ in characteristic $p$ and $f'\in\bfF_p\bbrac{T}^\times$, \eqref{eq:87025} follows from dividing both sides by $f'$.

	For \Cref{lem:84891}, differentiating $P_H\circ H=H\circ P$ gives $(P_H'\circ H)H'=(H'\circ P)P'$. Since $H'$ and $H'\circ P$ are units in $\bfR_{\bfQ_p}$ and substitution by $H$ is an $(\calO_F/\varpi_F^{e+1}\calO_F)$-linear automorphism, $P_H'$ and $P'$ lie in the exact same $\varpi_F$-adic layer. Hence $s(P_H)=s(P)$, and reducing the $\varpi_F^s$-layer gives \eqref{eq:81313}. Because $\overline H\in T\bfF_p\bbrac{T}$ has nonzero linear coefficient, $\overline H'\in\bfF_p\bbrac{T}^\times$ has valuation $0$ and substitution by $\overline H$ preserves $v_T$, so $v_T(h_{P_H})=v_T(h_P)$.

	For the last assertion, note that $h_P$ is the derivative of an element $A$ of $\bfF_p\pparen{T}$ if and only if its coefficients in all degrees congruent to $-1$ modulo $p$ vanish. If we write $P'=\sum_{i\in\bfZ}ia_i\cdot T^{i-1}$,  then for every integer $l$, the coefficient of $T^{lp-1}$ is $lp\cdot a_{lp}$, which has $\varpi_F$-adic valuation at least $e$. When $s<e$, this shows that $h_P$ has no term of degree $lp-1$ for any $l\in\bfZ$, and proves \Cref{lem:51094} in this case.

	When $s=e$, the contribution of $lp\cdot a_{lp}$ to $h_P$ is $lc\cdot\overline{a_{lp}}$, which equals $c$ for $l=1$ and $0$ for $l\neq 1$ as $\overline{a_{lp}}=0$. Separating the $l=1$ term from the remaining terms gives $h_P=cT^{p-1}+A'$ for some $A\in\bfF_p\pparen{T}$. Differentiating once more gives $h_P'=-cT^{p-2}+A''$. Since $A''$ has no term of degree $p-2\equiv -2\pmod p$, the coefficient of $T^{p-2}$ in $h_P'$ is $-c\neq 0$, so $h_P'\neq 0$ and $v_T(h_P')\leq p-2$.
\end{proof}

\begin{definition}\label{def:51837}
	For a lift $P\in\bfR_{\bfQ_p}$ with $\overline P=T^p$, we call $s(P)\in\{1,\ldots,e\}$ the \textbf{derivative level} of $P$ and $h_P\in\bfF_p\pparen{T}^\times$ the \textbf{normalized derivative} of $P$.
\end{definition}

\section{Case of $s(P)=e$: normal forms and valuation bounds}\label{sec:61843}

In this section, we study the case $s(P)=e$. Our goal is to bound $v_T(h_P)$ from above in terms of the ramification of a commuting automorphism $\sigma\in\Gamma$, which is achieved by conjugating $f_{\sigma,\pi}$ into the \textbf{normal form} $T+\alpha T^{d+1}+O(T^{2d+1})$ ($\alpha\in\bfF_p^\times$).

\subsection{The valuation bound at derivative level \texorpdfstring{$e$}{e}}\label{subsec:53178}

From now on, we write $O(T^n)$ (resp.\ $O(\pi^n)$) for an unspecified element of $\bfF_p\pparen{T}$ (resp.\ $\bfE_{\bfQ_p}$) of valuation at least $n$.

\begin{proposition}\label{thm:10556}
	Let $P,G\in\bfR_{\bfQ_p}$ satisfy $\overline P=T^p$, $G\circ P=P\circ G$, and $s(P)=e$. If the reduction $f\coloneqq\overline G$ has the form
	$$f=T+\alpha T^{d+1}+O\left(T^{2d+1}\right),\ \alpha\in\bfF_p^\times,\  d\geq1,\  d\equiv-1\pmod p,$$
	then the normalized derivative $h_P$ satisfies
	$$v_T(h_P)\leq p-1-d.$$
\end{proposition}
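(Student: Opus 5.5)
The plan is to compare both sides of the functional equation \eqref{eq:87025} of \Cref{lem:53419} to first order in the perturbation $f-T$. This produces a lower bound on $v_T(h_P')$ in terms of $n\coloneqq v_T(h_P)$. Combining it with the upper bound $v_T(h_P')\leq p-2$ from \Cref{lem:51094}(b), which applies because $s(P)=e$, gives the claim.

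\textbf{Step 1: the right-hand side of \eqref{eq:87025}.} Differentiating the normal form gives
$$f'=1+(d+1)\alpha T^{d}+O\left(T^{2d}\right).$$
Since $d\equiv-1\pmod p$, the coefficient $(d+1)\alpha$ vanishes in $\bfF_p$, so $f'=1+O(T^{2d})$. Hence $(f')^{p-1}=1+O(T^{2d})$ and
$$(f')^{p-1}h_P=h_P+O\left(T^{n+2d}\right).$$

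\textbf{Step 2: the left-hand side of \eqref{eq:87025}.} Write $f=T+\varepsilon$ with $\varepsilon=\alpha T^{d+1}+O(T^{2d+1})$. Expand $h_P\circ f$ by the Taylor formula with Hasse derivatives, which is valid in characteristic $p$:
$$h_P(T+\varepsilon)=\sum_{k\geq0}D^{(k)}(h_P)\,\varepsilon^k,\qquad v_T\left(D^{(k)}h_P\right)\geq n-k.$$
One checks this termwise on monomials $T^i$ via the binomial theorem. It converges, and it makes sense for Laurent series because $T+\varepsilon=T(1+\varepsilon/T)$ with $v_T(\varepsilon/T)\geq d\geq1$.
\begin{itemize}
	\item For $k\geq2$, the $k$-th term has valuation at least $n-k+k(d+1)=n+kd\geq n+2d$.
	\item For $k=1$, the term is $h_P'\varepsilon=\alpha T^{d+1}h_P'+h_P'\cdot O(T^{2d+1})$. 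The second summand is $O(T^{n-1+2d+1})=O(T^{n+2d})$.
\end{itemize}
Thus
$$h_P\circ f=h_P+\alpha T^{d+1}h_P'+O\left(T^{n+2d}\right).$$

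\textbf{Step 3: conclusion.} Subtracting the two expansions using \eqref{eq:87025} gives $\alpha T^{d+1}h_P'=O(T^{n+2d})$. Since $\alpha\neq0$, this means $v_T(h_P')\geq n+d-1$; this holds also if $h_P'=0$, though that case is excluded anyway. By \Cref{lem:51094}(b), $v_T(h_P')\leq p-2$. Therefore $n+d-1\leq p-2$, that is, $v_T(h_P)\leq p-1-d$.

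The only delicate point is Step 2: justifying the Taylor expansion and its valuation estimates for Laurent series in characteristic $p$. Ordinary divided derivatives $h^{(k)}/k!$ are unavailable for $k\geq p$, which is why I would use Hasse derivatives, or equivalently expand each $(T+\varepsilon)^i$ binomially and track valuations directly. The rest is bookkeeping with the congruence $d\equiv-1\pmod p$, which is exactly what kills the $T^d$ term in $f'$.
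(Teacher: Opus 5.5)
Your proposal is correct and is essentially the paper's argument. Both show $(f')^{p-1}=1+O(T^{2d})$ using $d\equiv-1\pmod p$, expand $h_P\circ f-h_P=\alpha T^{d+1}h_P'+O(T^{v_T(h_P)+2d})$, and combine the resulting bound $v_T(h_P')\geq v_T(h_P)+d-1$ with $v_T(h_P')\leq p-2$ from \Cref{lem:51094}. The only difference is cosmetic: you justify the first-order expansion with Hasse derivatives, whereas the paper expands each monomial as $T^n(1+y)^n=T^n(1+ny+O(y^2))$ with $y=f/T-1$, which is the same computation.
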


\begin{proof}
	Put $m\coloneqq v_T(h_P)$. We are going to evaluate the leading term of the difference $h_P\circ f-h_P$ in two ways.%

	First, since $d\equiv -1\pmod p$, the term $\alpha T^{d+1}$ vanishes after the differentiation. As a result, we have $f'=1+O(T^{2d})$ and consequently $(f')^{p-1}=1+O(T^{2d})$. The identity $h_P\circ f=(f')^{p-1}h_P$ in \Cref{lem:53419} implies
	\begin{equation}\label{eq:27200}
		h_P\circ f-h_P=\bigl((f')^{p-1}-1\bigr)h_P=O\left(T^{m+2d}\right).
	\end{equation}

	On the other hand, if we write $h_P=\sum_{n\geq m}h_nT^n$, then the binomial expansion gives
	\begin{align*}
		h_P\circ f-h_P= & \sum_{n\geq m}h_nT^n\left(1+\alpha T^d+O\left(T^{2d}\right)\right)^n-h_P                                  \\
		=               & \sum_{n\geq m}h_nT^n\left(1+n\left(\alpha T^d+O\left(T^{2d}\right)\right)+O\left(T^{2d}\right)\right)-h_P \\
		=               & T\left(\alpha T^d+O\left(T^{2d}\right)\right)h_P'+O\left(T^{m+2d}\right).
	\end{align*}
	Since $v_T(h_P')\geq m-1$, we know that
	$$h_P\circ f-h_P=\alpha T^{d+1}h_P'+O\left(T^{m+2d}\right).$$
	Comparing this expansion with equality \eqref{eq:27200} shows that $v_T\left(\alpha T^{d+1}h_P'\right)\geq m+2d$, i.e.
	$$d+1+v_T(h_P')\geq m+2d.$$

	On the other hand, since $s(P)=e$, \Cref{lem:51094} guarantees that $h_P'\neq 0$ and $v_T(h_P')\leq p-2$. Combining the two inequalities gives
	$$m+2d\leq d+1+v_T(h_P')\leq d+p-1,$$
	which implies the desired inequality.
\end{proof}

\subsection{Eliminating intermediate terms via normal forms}\label{subsec:21426}

For any $g\in\Gamma$, define its \textbf{ramification number} on $\bfE_{\bfQ_p}$ by
$$i_{\bfE_{\bfQ_p}}(g)\coloneqq v_{\bfE}(g\ast\pi-\pi)-1=v_T(f_{g,\pi}-T)-1,$$
with $i_{\bfE_{\bfQ_p}}(g)=+\infty$ if $g$ acts trivially; this is independent of the choice of uniformizer $\pi$. To conjugate an automorphism into normal form, we first establish two lemmas on the operator $\sigma-\id$.

\begin{lemma}\label{lem:32526}
	Suppose that $\sigma\in\Gamma$ and $\pi$ is a uniformizer of $\bfE_{\bfQ_p}$ such that
	$$\sigma\ast\pi=\pi+\alpha\pi^{d+1}+O(\pi^{d+2}),\  \alpha\in\bfF_p^\times,\  d\in\bfZ_{\geq 1}.$$
	Then $v_{\bfE}(\sigma\ast x-x)\geq v_{\bfE}(x)+d$ for every $x\in\bfE_{\bfQ_p}$. More precisely, if $x=b\pi^n+O\left(\pi^{n+1}\right)$ with $b\in\bfF_p$ and $n\in\bfZ$, then
	$$\sigma\ast x-x=nb\alpha\pi^{n+d}+O\left(\pi^{n+d+1}\right).$$
\end{lemma}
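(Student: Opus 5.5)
The plan is to transport the question to $\bfF_p\pparen{T}$ via $\ev_\pi$ and compute $\sigma\ast x - x$ using \Cref{lem:21745}. Write $f\coloneqq f_{\sigma,\pi}=T+\alpha T^{d+1}+O(T^{d+2})$. For $x=\ev_\pi(a)$ with $a\in\bfF_p\pparen{T}$, \eqref{eq:27955} gives $\sigma\ast x-x=\ev_\pi(a\circ f-a)$. Since $\ev_\pi$ is a valuation-preserving isomorphism ($v_{\bfE}\circ\ev_\pi=v_T$), it suffices to prove the following: if $a=bT^n+O(T^{n+1})$, then $a\circ f-a=nb\alpha T^{n+d}+O(T^{n+d+1})$. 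The first assertion then follows by taking $n=v_T(a)$ and $b$ the leading coefficient, for $a\neq 0$; the case $x=0$ is trivial.

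First treat monomials. Factor $f=T\,u$ with $u=1+\alpha T^d+O(T^{d+1})$, a unit of $\bfF_p\bbrac{T}$. For every $n\in\bfZ$, including negative $n$,
$$f^n-T^n=T^n(u^n-1)=T^n\bigl(n\alpha T^d+O(T^{d+1})\bigr)=n\alpha T^{n+d}+O(T^{n+d+1}).$$
The middle step expands $u^n=(1+w)^n$ with $v_T(w)\ge d$ as a binomial series, which is valid for negative exponents in $\bfF_p\bbrac{T}$. Terms of order $w^2$ have valuation at least $2d\ge d+1$. In particular, $v_T(f^n-T^n)\ge n+d$ for each $n$.

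Next pass to the full series. Write $a=\sum_{i\ge n}a_iT^i$, so that $a\circ f-a=\sum_{i\ge n}a_i(f^i-T^i)$; this sum converges by \Cref{lem:22046}, or directly since the valuations $\ge i+d$ tend to infinity. The term $i=n$ contributes $nb\alpha T^{n+d}+O(T^{n+d+1})$, and each term with $i\ge n+1$ lies in $O(T^{i+d})\subseteq O(T^{n+d+1})$. Summing, using that valuations behave well under convergent sums, gives the claim.

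No step is a genuine obstacle. The only points needing care are the validity of the binomial expansion for negative $n$ (handled by working with the unit $u$) and the continuity of substitution that justifies termwise evaluation. Note that when $p\mid n$ the leading term vanishes, and the statement then correctly reads $O(\pi^{n+d+1})$.
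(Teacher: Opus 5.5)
Your proof is correct and follows essentially the same route as the paper: factor $\sigma\ast\pi=\pi(1+y)$ with $y=\alpha\pi^d+O(\pi^{d+1})$, use $(1+y)^n=1+ny+O(y^2)$ for every $n\in\bfZ$ together with $2d\geq d+1$, and apply the resulting monomial formula term by term to the Laurent expansion of $x$. The only difference is that you pass to $\bfF_p\pparen{T}$ through $\ev_\pi$ and \eqref{eq:27955}, which spells out the continuity that the paper's term-by-term step uses without comment.
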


\begin{proof}
	Write $\sigma\ast\pi=\pi(1+y)$ with $y=\alpha\pi^d+O(\pi^{d+1})$. For every integer $n\in\bfZ$, the binomial expansion gives $(1+y)^n=1+ny+O(y^2)$. Since $2d\geq d+1$, we obtain $\sigma\ast\pi^n-\pi^n=n\alpha\pi^{n+d}+O(\pi^{n+d+1})$. Applying this formula term by term to the Laurent expansion of $x$ yields both assertions.
\end{proof}

\begin{lemma}\label{lem:45124}
	Suppose that $\sigma\in\Gamma$ and $\pi$ is a uniformizer of $\bfE_{\bfQ_p}$ such that
	$$\sigma\ast\pi=\pi+\alpha\pi^{d+1}+\beta\pi^\ell+O\left(\pi^{\ell+1}\right),$$
	where $\alpha,\beta\in\bfF_p^\times$, $d+2\leq\ell\leq2d$, and $d\equiv\ell\equiv-1\pmod p$. Then
	$$i_{\bfE_{\bfQ_p}}(\sigma^p)=\ell+(p-1)d-1<(p+1)d.$$
\end{lemma}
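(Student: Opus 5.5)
The plan is to compute $\sigma^p\ast\pi-\pi$ directly through the operator $\Delta\coloneqq\sigma-\id$ on $\bfE_{\bfQ_p}$. Since $\sigma$ acts as an $\bfF_p$-linear ring automorphism, $\Delta$ is $\bfF_p$-linear and commutes with $\id$. In characteristic $p$ the binomial coefficients $\binom{p}{j}$ with $0<j<p$ vanish, so
$$\sigma^p-\id=(\Delta+\id)^p-\id=\Delta^p.$$
Thus $\sigma^p\ast\pi-\pi=\Delta^p(\pi)$, and it suffices to find the leading term of $\Delta^p(\pi)$ by iterating \Cref{lem:32526}. That lemma applies because $\sigma\ast\pi=\pi+\alpha\pi^{d+1}+O(\pi^{d+2})$, using $\ell\geq d+2$.

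\textbf{First step.} We have $\Delta(\pi)=\alpha\pi^{d+1}+\beta\pi^\ell+O(\pi^{\ell+1})$.

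\textbf{Second step; this is the delicate one.} \Cref{lem:32526} applied naively to $\alpha\pi^{d+1}$ only gives a term that is $O(\pi^{2d+2})$, since $d+1\equiv0\pmod p$ kills the coefficient. That is not fine enough when compared with the expected term of order $\ell+d$. So I would treat this term by hand. Write $\sigma\ast\pi=\pi(1+y)$ with $v_{\bfE}(y)=d$, and $d+1=pk$. Then
$$\Delta(\pi^{d+1})=\pi^{d+1}\bigl((1+y^p)^k-1\bigr)=O\bigl(\pi^{d+1+pd}\bigr).$$
Since $p\geq3$ and $\ell\leq2d$, we have $d+1+pd>3d\geq \ell+d$, so this contribution is negligible.

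For the $\beta$ term, \Cref{lem:32526} with $n=\ell\equiv-1$ gives $-\alpha\beta\pi^{\ell+d}+O(\pi^{\ell+d+1})$. The error term $O(\pi^{\ell+1})$ contributes only $O(\pi^{\ell+d+1})$ by the general valuation bound in \Cref{lem:32526}. Hence
$$\Delta^2(\pi)=-\alpha\beta\pi^{\ell+d}+O(\pi^{\ell+d+1}).$$

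\textbf{Induction.} For $k\geq2$ I claim
$$\Delta^k(\pi)=c_k\pi^{\ell+(k-1)d}+O\bigl(\pi^{\ell+(k-1)d+1}\bigr),\qquad c_k\neq0\ \text{for } 2\leq k\leq p.$$
By \Cref{lem:32526}, $c_{k+1}=n_k\alpha c_k$, where $n_k=\ell+(k-1)d$. Error terms again only gain valuation at least $d$ under $\Delta$. Since $\ell\equiv d\equiv-1\pmod p$, we get $n_k\equiv-k\pmod p$, which is nonzero for $2\leq k\leq p-1$. Therefore $c_p=c_2\prod_{k=2}^{p-1}(-k\alpha)\neq0$.

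\textbf{Conclusion.} This gives $v_{\bfE}(\sigma^p\ast\pi-\pi)=\ell+(p-1)d$, that is,
$$i_{\bfE_{\bfQ_p}}(\sigma^p)=\ell+(p-1)d-1.$$
The inequality follows from $\ell\leq 2d$: indeed $\ell+(p-1)d-1\leq(p+1)d-1<(p+1)d$. The main obstacle is the second step, where the naive estimate for $\alpha\pi^{d+1}$ is insufficient and the Frobenius identity $(1+y)^{pk}=(1+y^p)^k$ is needed.
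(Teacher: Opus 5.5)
Your proof is correct and follows essentially the same route as the paper. Both arguments use $N=\sigma-\id$ with $N^p=\sigma^p-\id$, dispose of the $\alpha\pi^{d+1}$ term via Frobenius (your identity $(1+y)^{pk}=(1+y^p)^k$ is the paper's $N(\pi^{d+1})=N(\pi^{(d+1)/p})^p$), and iterate \Cref{lem:32526} on $\beta\pi^\ell$ using that $\prod_{r=0}^{p-2}(\ell+rd)\not\equiv0\pmod p$. The only difference is cosmetic: you absorb the $\alpha\pi^{d+1}$ contribution into the error after one application of $N$, whereas the paper bounds $N^{p-1}(\alpha\pi^{d+1})$ directly.
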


\begin{proof}
	Let $N\coloneqq\sigma-\id$. Then one has $N^p=\sigma^p-\id$, $i_{\bfE_{\bfQ_p}}(\sigma^p)=v_{\bfE}(N^p(\pi))-1$ and
	$$N(\pi)=\alpha\pi^{d+1}+\beta\pi^\ell+O\left(\pi^{\ell+1}\right).$$
	We apply $N^{p-1}$ to each of these terms.

	For the term $\alpha\pi^{d+1}$, since $p\mid d+1$, we know that $N\left(\pi^{d+1}\right)=N\left(\pi^{(d+1)/p}\right)^p$. By \Cref{lem:32526}, one has $$v_{\bfE}\left(N\left(\pi^{(d+1)/p}\right)\right)\geq \frac{d+1}{p}+d,$$
	so $v_{\bfE}\left(N(\pi^{d+1})\right)\geq d+1+pd$. Applying $N^{p-2}$ then raises the valuation by at least $(p-2)d$, giving
	$$v_{\bfE}\left(N^{p-1}\left(\alpha\pi^{d+1}\right)\right)\geq (2p-1)d+1>2d+(p-1)d\geq\ell+(p-1)d.$$

	For the term $\beta\pi^\ell$, applying \Cref{lem:32526} repeatedly gives
	$$N^{p-1}(\beta\pi^\ell)=\beta\alpha^{p-1}\left(\prod_{r=0}^{p-2}(\ell+rd)\right)\pi^{\ell+(p-1)d}+O\left(\pi^{\ell+(p-1)d+1}\right).$$
	Since $\ell\equiv d\equiv -1\pmod p$, we have $\ell+rd\equiv -(r+1)\not\equiv 0\pmod p$ for every $0\leq r\leq p-2$. Hence $\prod_{r=0}^{p-2}(\ell+rd)$ is nonzero in $\bfF_p$, and $v_{\bfE}(N^{p-1}(\beta\pi^\ell))=\ell+(p-1)d$.

	For the remainder $O\left(\pi^{\ell+1}\right)$, since $N$ increases valuation by at least $d$ at each step by \Cref{lem:32526}, we have $v_{\bfE}\left(N^{p-1}\left(O\left(\pi^{\ell+1}\right)\right)\right)\geq \ell+1+(p-1)d$.

	Combining these estimates with the ultrametric inequality gives the desired equality.
\end{proof}

Now we prove the existence of a uniformizer in which $f_{\sigma,\pi}$ takes the normal form.
\begin{proposition}\label{thm:41759}
	Let $\sigma$ be an element of $\Gamma$ satisfying the following conditions:
	\begin{enumerate}
		\item The integer $d\coloneqq i_{\bfE_{\bfQ_p}}(\sigma)$ satisfies $d\equiv-1\pmod p$.
		\item $i_{\bfE_{\bfQ_p}}(\sigma^p)\geq(p+1)d$.
	\end{enumerate}
	Then there exists a uniformizer $\pi$ of $\bfE_{\bfQ_p}$ and an element $\alpha\in\bfF_p^\times$ such that
	\begin{equation}\label{eq:82881}
		\sigma\ast\pi=\pi+\alpha\pi^{d+1}+O\left(\pi^{2d+1}\right),
	\end{equation}
	or equivalently, $f_{\sigma,\pi}=T+\alpha T^{d+1}+O\left(T^{2d+1}\right)$.
\end{proposition}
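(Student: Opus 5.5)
Start from an arbitrary uniformizer $\pi_0$. Since $d=i_{\bfE_{\bfQ_p}}(\sigma)$, we have $\sigma\ast\pi_0=\pi_0+\alpha\pi_0^{d+1}+O(\pi_0^{d+2})$ with $\alpha\in\bfF_p^\times$. The plan is to kill the intermediate terms of degree $d+2,\dots,2d$ one at a time by coordinate changes of the form $\pi\mapsto\pi+b\pi^{k}$. The invariant to carry along is that the leading term $\alpha\pi^{d+1}$ does not change. So by induction on $\ell\in\{d+2,\dots,2d\}$, I will construct uniformizers $\pi$ with
$$\sigma\ast\pi=\pi+\alpha\pi^{d+1}+O(\pi^{\ell}).$$
Because each correction will only change terms of degree $\ge\ell$, the limiting step is simply $\ell=2d+1$, and finitely many steps suffice.

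For the inductive step, write $\sigma\ast\pi=\pi+\alpha\pi^{d+1}+\beta\pi^{\ell}+O(\pi^{\ell+1})$ with $\beta\in\bfF_p$ and $\ell\le 2d$. If $\beta=0$ there is nothing to do, so assume $\beta\neq0$. I would split according to whether $\ell\equiv-1\pmod p$.

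If $\ell\not\equiv-1\pmod p$, set $n=\ell-d$, so $2\le n\le d$ and $n\not\equiv0\pmod p$ (since $d\equiv-1$). Put $\pi'=\pi+b\pi^{n}$. By \Cref{lem:32526},
$$\sigma\ast\pi'-\pi'=\sigma\ast\pi-\pi+b\bigl(n\alpha\pi^{n+d}+O(\pi^{n+d+1})\bigr).$$
The term $b\,n\alpha\,\pi^{\ell}$ can therefore cancel $\beta\pi^\ell$ by choosing $b=-\beta/(n\alpha)$. Next, re-express the result in terms of $\pi'$. Since $\pi=\pi'-b\pi'^{n}+\cdots$, we get $\alpha\pi^{d+1}=\alpha\pi'^{d+1}-(d+1)\alpha b\pi'^{d+n}+\cdots$. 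Here $d+1\equiv0$ in $\bfF_p$, so this correction vanishes; more precisely, $\pi^{d+1}=(\pi^{(d+1)/p})^p$ changes only in degrees $\ge d+1+p(n-1)\ge\ell+1$. All other cross terms lie in degree $\ge\ell+1$ because $n\ge2$. Hence $\sigma\ast\pi'=\pi'+\alpha\pi'^{d+1}+O(\pi'^{\ell+1})$. I expect the main bookkeeping to be this change-of-variable expansion, which has to be checked carefully but is routine.

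If $\ell\equiv-1\pmod p$, the coefficient $n=\ell-d$ is divisible by $p$, so the term cannot be removed this way. Here I would invoke hypothesis (2) instead. Note that $\ell\ge d+2$ and $\ell\equiv d\pmod p$ force $\ell\ge d+p$, and in particular $\ell\ge d+2$ holds. Then \Cref{lem:45124} gives $i_{\bfE_{\bfQ_p}}(\sigma^p)=\ell+(p-1)d-1<(p+1)d$, contradicting (2). So $\beta=0$ automatically in this case.

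Induction on $\ell$ up to $2d$ then yields \eqref{eq:82881}. The equivalent form $f_{\sigma,\pi}=T+\alpha T^{d+1}+O(T^{2d+1})$ follows by applying $\ev_\pi^{-1}$.
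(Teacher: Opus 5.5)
Your proposal is correct and follows essentially the same route as the paper: the coordinate change $\pi\mapsto\pi+b\pi^{\ell-d}$ removes a term of degree $\ell\not\equiv-1\pmod p$, and \Cref{lem:45124} together with hypothesis (2) excludes a nonzero term of degree $\ell\equiv-1\pmod p$. The paper's constant $t=-\beta/((\ell-2d-1)\alpha)$ agrees with your $b=-\beta/((\ell-d)\alpha)$ in $\bfF_p$, because $d+1\equiv0\pmod p$.
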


\begin{proof}
	Pick an arbitrary uniformizer $\varpi$ of $\bfE_{\bfQ_p}$, and write
	$$\sigma\ast\varpi=\varpi+\alpha\varpi^{d+1}+\beta\varpi^\ell+O\left(\varpi^{\ell+1}\right),\  \alpha\in\bfF_p^\times,\ \beta\in\bfF_p.$$
	If $\ell\geq 2d+1$ then we are done. Otherwise, we assume that $\ell\in\{d+2,\ldots,2d\}$ and $\beta\neq 0$. Note that under the hypotheses of the proposition, \Cref{lem:45124} implies that $\ell\not\equiv-1\pmod p$, and consequently $\ell-2d-1\equiv\ell+1\not\equiv0\pmod p$. This allows us to write $t\coloneqq -\frac{\beta}{(\ell-2d-1)\alpha}\in\bfF_p^\times$ and set $\varpi'\coloneqq\varpi+t\varpi^{\ell-d}$. We are going to prove that
	\begin{equation*}
		\sigma\ast\varpi'=\varpi'+\alpha(\varpi')^{d+1}+O\left((\varpi')^{\ell+1}\right),
	\end{equation*}
	and the desired uniformizer can be obtained by repeating this procedure. In fact, we have
	\begin{equation}\label{eq:57688}
		\sigma\ast\varpi'-\varpi'-\alpha(\varpi')^{d+1}=(\sigma\ast\varpi-\varpi)+t\left(\sigma\ast\left(\varpi^{\ell-d}\right)-\left(\varpi^{\ell-d}\right)\right)-\alpha\varpi^{d+1}\left(1+t\varpi^{\ell-d-1}\right)^{d+1}.
	\end{equation}
	\Cref{lem:32526} gives
	$$\sigma\ast\left(\varpi^{\ell-d}\right)-\left(\varpi^{\ell-d}\right)=(\ell-d)\alpha\varpi^\ell+O\left(\varpi^{\ell+1}\right),$$
	and the binomial expansion implies
	$$\left(1+t\varpi^{\ell-d-1}\right)^{d+1}=1+(d+1)t \varpi^{\ell-d-1}+O\left(\varpi^{\ell-d}\right).$$
	Substituting these expansions into \eqref{eq:57688}, we obtain
	\begin{align*}
		\sigma\ast\varpi'-\varpi'-\alpha(\varpi')^{d+1}= & \alpha\varpi^{d+1}+\beta\varpi^\ell+O\left(\varpi^{\ell+1}\right)+t\left((\ell-d)\alpha\varpi^\ell+O\left(\varpi^{\ell+1}\right)\right) \\
		                                                 & \quad-\alpha\varpi^{d+1}\left(1+(d+1)t \varpi^{\ell-d-1}+O\left(\varpi^{\ell-d}\right)\right)                                           \\
		=                                                & O\left(\varpi^{\ell+1}\right).
	\end{align*}
	Since $\varpi$ and $\varpi'$ have the same valuation, the result follows.
\end{proof}

\section{Case of $s(P)<e$: invariant exact differentials}\label{sec:42840}

When $s(P)<e$, \Cref{lem:51094} shows that $h_P=A'$ for some $A\in\bfF_p\pparen{T}$, so the upper bound on $v_T(h_P')$ used in \Cref{sec:61843} is no longer available. Instead, we use the exactness of $h_P\d T$ together with the tame generator $\delta$ to construct a $\tau$-invariant exact differential on $\bfF_p\pparen{T}$, and show that the existence of such a differential imposes an integrality constraint on the $p$-adic limit of ramification numbers.

Throughout this section, we fix the uniformizer $u$ of $\bfE_{\bfQ_p}$ from \Cref{subsec:13146}, abbreviate $f_g\coloneqq f_{g,u}$ for $g\in\Gamma$, and let $\Gamma$ act on $\bfF_p\pparen{T}$ via $g(x)\coloneqq x\circ f_g$.

\subsection{From lower-level lifts to invariant exact differentials}\label{subsec:86873}

Let $\bfF_p\pparen{T}\d T$ be the space of continuous differentials on $\bfF_p\pparen{T}$, equipped with the exterior derivative $\d A=A'\d T$ and the action of $\sigma\in\Gamma$ given by
$$\sigma(a\d T)\coloneqq (a\circ f_\sigma)f_\sigma'\d T.$$
For a nonzero differential $\omega=a\d T$, we set $\opn{ord}_T(\omega)\coloneqq v_T(a)$. %

\begin{lemma}\label{lem:45874}
	Let $P,F_\tau,F_\delta\in\bfR_{\bfQ_p}$ satisfy $\overline P=T^p$, $\overline{F_\tau}=f_\tau$, $\overline{F_\delta}=f_\delta$, $F_\tau\circ P=P\circ F_\tau$, and $F_\delta\circ P=P\circ F_\delta$. Suppose that $f_\tau\in T+T^2\bfF_p\bbrac{T}$ and that the linear coefficient $\lambda\in\bfF_p^\times$ of $f_\delta=\lambda T+O(T^2)$ has multiplicative order $p-1$. If $s(P)<e$, then there exists a nonzero exact differential $\omega\in\bfF_p\pparen{T}\d T$ such that $\tau(\omega)=\omega$.
\end{lemma}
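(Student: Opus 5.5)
The plan is to build $\omega$ out of $h_P$ and a $(p-1)$-st root of $h_P$. The guiding observation is that \eqref{eq:87025} says $h_P$ transforms like a tensor of weight $p-1$, i.e.\ like $(\d T)^{\otimes(p-1)}$. If $r$ is a $(p-1)$-st root of $h_P$, then $r$ should transform like $\d T$, and $r^{-p}\,\d A$ (with $A'=h_P$) should then be invariant. This form is exact because $r^{-p}$ is a $p$-th power.

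First, apply \Cref{lem:53419} to $G=F_\tau$ and $G=F_\delta$. This gives
$$h_P\circ f_\tau=(f_\tau')^{p-1}h_P,\qquad h_P\circ f_\delta=(f_\delta')^{p-1}h_P.$$
Write $h_P=h_mT^m(1+O(T))$ with $h_m\in\bfF_p^\times$ and $m=v_T(h_P)$. Compare lowest-order terms in the $\delta$-identity. Since $f_\delta=\lambda T+O(T^2)$ and $f_\delta'=\lambda+O(T)$, this gives $h_m\lambda^mT^m=\lambda^{p-1}h_mT^m$, so $\lambda^m=1$. Because $\lambda$ has order $p-1$, we conclude $(p-1)\mid m$. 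This is the only place $\delta$ is used.

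Next, set $\tilde h\coloneqq h_P/h_m=T^m(1+O(T))$. It still satisfies $\tilde h\circ f_\tau=(f_\tau')^{p-1}\tilde h$. Since $(p-1)\mid m$ and $p-1$ is prime to $p$, the unit $1+O(T)$ has a $(p-1)$-st root in $1+T\bfF_p\bbrac{T}$ (binomial series or Hensel). Hence there is $r=T^{m/(p-1)}(1+O(T))$ with $r^{p-1}=\tilde h$. Then $(r\circ f_\tau)^{p-1}=(f_\tau'r)^{p-1}$, so $r\circ f_\tau=\zeta f_\tau' r$ for some $\zeta\in\bfF_p^\times$. Now $f_\tau\in T+T^2\bfF_p\bbrac{T}$, so $f_\tau'=1+O(T)$ and $r\circ f_\tau=T^{m/(p-1)}(1+O(T))$. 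Comparing leading coefficients forces $\zeta=1$, so $r\circ f_\tau=f_\tau'\,r$.

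Finally, use $s(P)<e$. By \Cref{lem:51094}(a), $h_P=A'$ for some $A\in\bfF_p\pparen{T}$, hence $\tilde h=B'$ with $B=A/h_m$. Define
$$\omega\coloneqq r^{-p}\,\d B=r^{-p}\tilde h\,\d T=r^{-1}\,\d T\neq0.$$
It is exact: $(r^{-p})'=0$ in characteristic $p$, so $\omega=\d(r^{-p}B)$. It is $\tau$-invariant:
$$\tau(\omega)=(r\circ f_\tau)^{-1}f_\tau'\,\d T=(f_\tau'r)^{-1}f_\tau'\,\d T=r^{-1}\,\d T=\omega.$$
The main point needing care is the normalization step: that the root $r$ exists over $\bfF_p$ (handled by dividing out $h_m$ and using $(p-1)\mid m$), and that the ambiguity $\zeta$ is killed by the tangency of $f_\tau$ to the identity. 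Everything else is formal.
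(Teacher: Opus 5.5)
Your proof is correct and follows essentially the same route as the paper. The $\delta$-equation forces $(p-1)\mid m$, and a $(p-1)$-st root of the normalized $h_P$ gives the invariant form, with the root-of-unity ambiguity removed because $f_\tau\in T+T^2\bfF_p\bbrac{T}$; in fact your $r^{-1}$ is exactly the paper's $a$ with $a^{p-1}=b/h_P$, and your primitive $r^{-p}B$ is the paper's $b^{-1}a^pA$.
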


\begin{proof}
	Set $m\coloneqq v_T(h_P)$. Since $s(P)<e$, \Cref{lem:67522} provides $A\in\bfF_p\pparen{T}$ such that $h_P=A'$, together with the two functional equations
	\begin{equation}\label{eq:28265}
		h_P\circ f_\tau=(f_\tau')^{p-1}h_P,\
		h_P\circ f_\delta=(f_\delta')^{p-1}h_P.
	\end{equation}
	Write $h_P=bT^m(1+z)$ with $b\in\bfF_p^\times$ and $z\in T\bfF_p\bbrac{T}$. Comparing the coefficients of degree $m$ in the second identity of \eqref{eq:28265} gives $b\lambda^m=b\lambda^{p-1}=b$. Since $b\neq 0$ and $\lambda\in\bfF_p^\times$ has exact order $p-1$, we deduce that $p-1\mid m$.

	Because $p-1$ is invertible in $\bfF_p$,
	Hensel's lemma guarantees that the unit $(1+z)^{-1}\in 1+T\bfF_p\bbrac{T}$ admits a unique $(p-1)$-st root in $1+T\bfF_p\bbrac{T}$. Combined with $p-1\mid m$, this allows us to define $a\in T^{-m/(p-1)}(1+T\bfF_p\bbrac{T})\subset\bfF_p\pparen{T}^\times$ such that
	$$a^{p-1}=\frac{b}{h_P}.$$
	Taking the $(p-1)$-st power of $\tau(a\d T)/(a\d T)$ and using the first identity in \eqref{eq:28265} gives
	$$\left(\frac{(a\circ f_\tau)f_\tau'}{a}\right)^{p-1}=\frac{h_P}{h_P\circ f_\tau}(f_\tau')^{p-1}=1.$$
	Since $f_\tau\in T+T^2\bfF_p\bbrac{T}$, the quotient $(a\circ f_\tau)f_\tau'/a$ lies in $1+T\bfF_p\bbrac{T}$. The only $(p-1)$-st root of unity in $1+T\bfF_p\bbrac{T}$ is $1$, so $(a\circ f_\tau)f_\tau'=a$. Thus $\omega\coloneqq a\d T$ is a nonzero differential fixed by $\tau$.

	Finally, since $a^{p-1}h_P=b$, multiplying by $a$ gives $a^ph_P=ba$. Using $h_P=A'$ and $(a^p)'=0$ in characteristic $p$, we find
	$$\omega=a\d T=b^{-1}a^ph_P\d T=b^{-1}a^pA'\d T=\d\left(b^{-1}a^pA\right),$$
	so $\omega$ is exact.
\end{proof}

\subsection{Approximate invariants and the \texorpdfstring{$p$}{p}-adic limit of ramification numbers}\label{subsec:20375}

To rule out $\tau$-invariant exact differentials, we first establish a general property of sequences of \emph{approximate invariants} of an infinite-order automorphism $\sigma\in\Gamma$, namely sequences $(x_n)_{n\geq 0}$ in $\bfF_p\pparen{T}^\times$ satisfying $v_T(\sigma(x_n)-x_n)-v_T(x_n)\to+\infty$.

\begin{lemma}\label{lem:44677}
	Let $\sigma\in\Gamma$ act with infinite order on $\bfE_{\bfQ_p}$, and suppose $f_\sigma\in T+T^2\bfF_p\bbrac{T}$. Let $(x_n)_{n\geq 0}$ be a sequence in $\bfF_p\pparen{T}^{\times}$ satisfying
	$$\lim_{n\to\infty}v_T\left(\frac{\sigma(x_n)}{x_n}-1\right)=+\infty$$
	If the sequence $\{v_T(x_n)\}_{n\geq 1}$ converges $p$-adically to $m\in\bfZ_p$, then $m=0$.
\end{lemma}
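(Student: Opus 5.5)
The plan is to pass to the $p$-adic limit and produce an exact ``$p$-adic power of $T$'' cocycle relation, then use the growth of ramification of $\sigma^{p^k}$ to force $m=0$. Write $x_n=b_nT^{m_n}u_n$ with $b_n\in\bfF_p^\times$, $m_n=v_T(x_n)$ and $u_n\in 1+T\bfF_p\bbrac{T}$. Put $w\coloneqq f_\sigma/T\in 1+T\bfF_p\bbrac{T}$. Then
$$\frac{\sigma(x_n)}{x_n}=w^{m_n}\cdot\frac{u_n\circ f_\sigma}{u_n}.$$
The group $1+T\bfF_p\bbrac{T}$ is a $\bfZ_p$-module, and $a\mapsto w^a$ is continuous in the $p$-adic topology of $a$. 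Indeed, $w^{p^N}\in 1+T^{p^N}\bfF_p\bbrac{T}$, so $a\equiv a'\pmod{p^N}$ implies $w^a\equiv w^{a'}\pmod{T^{p^N}}$. Hence $w^{m_n}\to w^{m}$, and the hypothesis gives $\sigma(u_n)/u_n\to w^{-m}$.

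The group $1+T\bfF_p\bbrac{T}$ is compact and $u\mapsto\sigma(u)/u$ is continuous. Therefore, after passing to a convergent subsequence $u_n\to u$, we get an exact identity
$$\sigma(u)=w^{-m}u,\qquad u\in 1+T\bfF_p\bbrac{T}.$$

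Next I would iterate this identity. From $f_{\sigma^{i+1}}=f_\sigma\circ f_{\sigma^i}$ (\Cref{lem:31185}), the element $w_i\coloneqq f_{\sigma^i}/T$ satisfies the cocycle rule $w_{i+1}=\sigma^i(w)\,w_i$. This rule survives raising to the $\bfZ_p$-power $-m$, because substitution is a continuous ring map commuting with $\bfZ_p$-powers. Induction then gives
$$\sigma^{N}(u)=w_N^{-m}u\quad\text{for all }N,\text{ in particular for }N=p^k.$$
Since $\sigma$ has infinite order, $d_k\coloneqq i_{\bfE_{\bfQ_p}}(\sigma^{p^k})$ is finite. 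Also $d_k\to\infty$, since $\sigma^{p^k}\to\id$ and the action is continuous. Write $f_{\sigma^{p^k}}=T+\alpha_kT^{d_k+1}+\cdots$ with $\alpha_k\in\bfF_p^\times$. Suppose $m\neq0$ and write $m=p^jm'$ with $m'\in\bfZ_p^\times$. Then
$$w_{p^k}^{-m}=\left(1-m'\alpha_kT^{d_k}+\cdots\right)^{p^j}=1-m'\alpha_kT^{p^jd_k}+O\left(T^{p^jd_k+1}\right),$$
so $v_T\bigl(\sigma^{p^k}(u)/u-1\bigr)=p^jd_k$ exactly.

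On the other side, I would expand $u=1+cT^r+\cdots$, or more generally $u-1$ modulo $p$-th powers, and use \Cref{lem:32526} for $\sigma^{p^k}$. This gives
$$v_T\bigl(\sigma^{p^k}(u)-u\bigr)=r'+d_k+(\text{error}),$$
where $r'$ is the $T$-degree of the first term of $u$ whose exponent is prime to $p$. This degree is independent of $k$. In the case $j=0$ the two expressions are $d_k$ and $\ge d_k+1$ respectively, an immediate contradiction.

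For $j\ge1$ I would reduce to $j=0$ by extracting a $p$-th root. The element $\sigma(u)/u=(w^{-m/p})^p$ is a $p$-th power, so $\d\log u$ is $\sigma$-invariant. I want to conclude that $u=u_1^p\cdot\kappa$ with $\sigma(u_1)=w^{-m/p}u_1$ and $\kappa$ $\sigma$-invariant. For this I would use that the only $\sigma$-invariant elements of $1+T\bfF_p\bbrac{T}$ are trivial when $\sigma$ has infinite order, again by \Cref{lem:32526} applied to $\sigma^{p^k}$ with $d_k\to\infty$. Induction on $j$ then reaches $j=0$.

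I expect the main obstacle to be this descent step for $j\ge1$, namely showing that $u$ itself is a $p$-th power up to an invariant unit. If it resists, the fallback is a direct valuation comparison. Let $r$ be the $T$-degree of the first term of $u-1$ with exponent prime to $p$; this is finite unless $u$ is a $p$-th power, which is exactly the case the descent handles. By \Cref{lem:32526} applied to $\sigma^{p^k}$ one gets
$$v_T\bigl(\sigma^{p^k}(u)/u-1\bigr)\le r+d_k,$$
while the right-hand side of the iterated identity has valuation exactly $p^jd_k$. Since $r$ is fixed and $d_k\to\infty$, we have $p^jd_k>r+d_k$ for $k\gg0$ when $j\ge1$, which is the desired contradiction.
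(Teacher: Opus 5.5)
Your proposal is correct and is essentially the paper's proof: the limiting identity $\sigma(u)=w^{-m}u$, its iteration to $\sigma^{p^k}$ with right-hand side of valuation exactly $p^jd_k$, \Cref{lem:32526} for $j=0$, and, for $j\geq1$, your fallback comparison of $r+d_k$ with $p^jd_k$, which is exactly the paper's claim $U'=0$ (here $r=v_T(U')+1$). After that, $u=u_1^p$ and injectivity of Frobenius give $\sigma(u_1)=w^{-m/p}u_1$, so the invariant-unit decomposition you were worried about is never needed.

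The one step to tighten is the bound $v_T\bigl(\sigma^{p^k}(u)-u\bigr)\leq r+d_k$. A termwise application of \Cref{lem:32526} alone does not rule out cancellation in degree $r+d_k$. Instead, group the terms of $u$ with exponent divisible by $p$ as $A^p$ and use $(\sigma^{p^k}-\id)(A^p)=\bigl((\sigma^{p^k}-\id)(A)\bigr)^p$, whose valuation is at least $p(d_k+1)>r+d_k$ for $k\gg0$.
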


\begin{proof}
	Put $f\coloneqq f_\sigma$, and write each $x_n$ as $x_n=T^{m_n}U_n$ with $m_n=v_T(x_n)\in\bfZ$ and $U_n\in\bfF_p\bbrac{T}^{\times}$. Multiplying $x_n$ by $U_n(0)^{-1}\in\bfF_p^\times$ does not change $v_T(x_n)$ or $v_T\left(\frac{\sigma(x_n)}{x_n}-1\right)$, so we may assume $U_n\in 1+T\bfF_p\bbrac{T}$. By compactness of $1+T\bfF_p\bbrac{T}$ with respect to the $T$-adic metric, we may pass to a subsequence so that $U_n$ converges to some $U\in 1+T\bfF_p\bbrac{T}$. %
	For any $z\in1+T\bfF_p\bbrac{T}$ and $c\in\bfZ_p$, the power $z^c\in 1+T\bfF_p\bbrac{T}$ is well defined by continuity from integer exponents. %
	Since $m_n\lto m$ in $\bfZ_p$ and $f/T\in 1+T\bfF_p\bbrac{T}$, passing to the limit in
	$$\frac{\sigma(x_n)}{x_n}=\left(\frac fT\right)^{m_n}\frac{U_n\circ f}{U_n}\longrightarrow 1$$
	yields the functional equation
	\begin{equation}\label{eq:66693}
		\left(\frac fT\right)^m(U\circ f)=U.
	\end{equation}

	For every $j\in\bfN$, we set $f_j\coloneqq f_{\sigma^{p^j}}=f^{\circ p^j}$ and $d_j\coloneqq v_T(f_j-T)-1$. Since $\sigma$ has infinite order and $f\in T+T^2\bfF_p\bbrac{T}$, $\{d_j\}_{j\geq 1}$ is a strictly increasing sequence of positive integers. %
	Composing \eqref{eq:66693} with $f^{\circ i}$ gives
	$$\left(\frac{f^{\circ i+1}}{f^{\circ i}}\right)^m\left(U\circ f^{\circ i+1}\right)=U\circ f^{\circ i},\qquad i=0,1,\ldots,p^j-1.$$
	Multiplying these $p^j$ identities and cancelling the intermediate factors gives
	\begin{equation}\label{eq:44455}
		U\circ f_j-U=U\left(\left(\frac{T}{f_j}\right)^m-1\right).
	\end{equation}

	Assume for contradiction that $m\neq 0$, and set $r\coloneqq v_p(m)\geq 0$. For any $z\in 1+T\bfF_p\bbrac{T}\setminus\{1\}$, the binomial expansion of $z^{p^r}$ shows that $v_T(z^m-1)=p^r v_T(z-1)$. Since $v_T(T/f_j-1)=d_j$, the right-hand side of equation \eqref{eq:44455} has $T$-adic valuation exactly $p^rd_j$.

	If $r=0$, the right-hand side of \eqref{eq:44455} has valuation $d_j$, whereas applying \Cref{lem:32526} to $U-U(0)$ shows that the left-hand side has valuation $v_T(U\circ f_j-U)\geq d_j+1$, a contradiction.

	If $r\geq 1$, we first claim that $U'=0$. Indeed, if $U'\neq 0$, writing $f_j=T+\delta_j$ with $v_T(\delta_j)=d_j+1$ gives the Taylor expansion $U\circ f_j-U=\delta_jU'+O\left(T^{2d_j+2}\right)$. For sufficiently large $j$, we have $2d_j+2>d_j+1+v_T(U')$, so the left-hand side of \eqref{eq:44455} has valuation $d_j+1+v_T(U')$. This cannot equal $p^rd_j$ for all large $j$ because $p^r\geq p\geq 3$ and $d_j\lto+\infty$. Thus $U'=0$, so $U=V^p$ for a unique element $V\in 1+T\bfF_p\bbrac{T}$. Taking $p$-th roots in \eqref{eq:66693} then gives
	$$\left(\frac fT\right)^{m/p}(V\circ f)=V,$$
	which is the same identity with $(U,m)$ replaced by $(V,m/p)$ and $v_p(m/p)=r-1$. Repeating this descent will eventually reach the case $r=0$, which we have already shown to be impossible. The result follows.
\end{proof}

We now apply \Cref{lem:44677} to the orbit sums constructed from an invariant exact differential.

\begin{proposition}\label{prop:15106}
	Let $\sigma\in\Gamma$ act with infinite order on $\bfE_{\bfQ_p}$ and satisfy $f_\sigma\in T+T^2\bfF_p\bbrac{T}$. Suppose that the ramification numbers
	$$d_n\coloneqq i_{\bfE_{\bfQ_p}}\left(\sigma^{p^n}\right)=v_T(f_{\sigma^{p^n}}-T)-1$$
	converge $p$-adically to $b\in\bfZ_p$ as $n\lto\infty$. If $\sigma$ fixes a nonzero exact differential $\omega\in\bfF_p\pparen{T}\d T$, then
	$$b=-\opn{ord}_T(\omega)-1\in\bfZ.$$
\end{proposition}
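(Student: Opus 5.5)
The plan is to manufacture, out of the invariant differential $\omega=a\,\d T$, a sequence of approximate invariants of $\sigma$ whose valuations are $\opn{ord}_T(\omega)+1+d_n$. Then \Cref{lem:44677} forces the $p$-adic limit of these valuations to vanish, which gives exactly $b=-\opn{ord}_T(\omega)-1$. The natural candidate is to pair $\omega$ with the ``vector field'' $f_{\sigma^{p^n}}-T$ attached to $\sigma^{p^n}$. Concretely, write $f\coloneqq f_\sigma$, $f_n\coloneqq f_{\sigma^{p^n}}=f^{\circ p^n}$, $\delta_n\coloneqq f_n-T$, so that $v_T(\delta_n)=d_n+1$, and put
$$x_n\coloneqq a\,\delta_n\in\bfF_p\pparen{T}^\times .$$
Since $\sigma$ has infinite order, $\delta_n\neq0$, and $v_T(x_n)=\opn{ord}_T(\omega)+1+d_n$. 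This converges $p$-adically to $\opn{ord}_T(\omega)+1+b$ by hypothesis.

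The key step is to check that $v_T\bigl(\sigma(x_n)/x_n-1\bigr)\to+\infty$. By definition $\sigma(x_n)=(a\circ f)\,(f_n\circ f-f)$, and since $f_n=f^{\circ p^n}$ commutes with $f$, we have $f_n\circ f=f\circ f_n$. I will use the divided-difference identity in $\bfF_p\bbrac{X,Y}$,
$$f(X)-f(Y)=(X-Y)\,Q(X,Y),\qquad Q(T,T)=f',$$
valid for any power series $f$ in characteristic $p$. Specializing at $X=f_n$, $Y=T$ gives
$$f\circ f_n-f=\delta_n\,Q(f_n,T)=\delta_n\bigl(f'+O(T^{d_n+1})\bigr),$$
because $Q(T+\delta_n,T)-Q(T,T)$ is divisible by $\delta_n$. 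Since $f'\in 1+T\bfF_p\bbrac{T}$ is a unit, this yields $\sigma(x_n)=(a\circ f)f'\,\delta_n\bigl(1+O(T^{d_n+1})\bigr)$. The invariance $\sigma(\omega)=\omega$ says precisely that $(a\circ f)f'=a$, so
$$\sigma(x_n)/x_n=1+O(T^{d_n+1}).$$
Because $d_n\to+\infty$ (they strictly increase, as noted in the proof of \Cref{lem:44677}), the sequence $(x_n)$ consists of approximate invariants.

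Applying \Cref{lem:44677} to $(x_n)$, with $m=\opn{ord}_T(\omega)+1+b$, gives $m=0$, hence $b=-\opn{ord}_T(\omega)-1\in\bfZ$. The only delicate point is the divided-difference expansion: I want the error to be $O(T^{d_n+1})$ relative to $\delta_n$, uniformly. This is exactly what the polynomial identity provides, so no characteristic-$p$ Taylor coefficients (which would involve factorials) are needed. In this route the exactness of $\omega$ is not actually used; only its $\sigma$-invariance and nonvanishing enter. Exactness would matter only if one instead built the approximate invariants as orbit sums of a primitive of $\omega$.
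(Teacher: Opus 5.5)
Your proof is correct, and it takes a genuinely different route from the paper's.

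\textbf{How the two arguments differ.} The paper uses exactness essentially. It picks a primitive $U$ of $\omega$ with no exponents divisible by $p$, writes $\sigma(U)-U=B^p$, and takes the orbit sums $x_n=\sum_{j<p^n}\sigma^j(B)$. Their $p$-th powers telescope to $\sigma^{p^n}(U)-U$, so \Cref{lem:32526} applied to $\sigma^{p^n}$ gives $v_T(x_n)=(\opn{ord}_T(\omega)+1+d_n)/p$.

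You instead contract $\omega=a\,\d T$ with the ``vector field'' $(f_{\sigma^{p^n}}-T)\,\partial_T$. This is approximately $\sigma$-invariant because $f_\sigma$ commutes with $f_{\sigma^{p^n}}$. Your first-order divided-difference estimate $\delta_n\circ f=f'\delta_n\bigl(1+O(T^{d_n+1})\bigr)$, combined with $(a\circ f)f'=a$, is exactly what is needed. Both routes then finish with \Cref{lem:44677}.

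\textbf{What your route buys.} Your argument is shorter and proves a stronger statement: it holds for every nonzero $\sigma$-invariant differential, exact or not. This is consistent with the cyclotomic example $f=(1+T)^{1+p}-1$, which fixes the non-exact form $\d T/(1+T)$ and has $d_n=p^{n+1}-1\to-1$.

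This matters for the paper's structure. The construction in \Cref{lem:45874} of a nonzero $\tau$-invariant $\omega=a\,\d T$ with $a^{p-1}h_P\in\bfF_p^\times$ uses only the functional equations of \Cref{lem:53419}, which hold at every derivative level. The hypothesis $s(P)<e$ enters solely in the final exactness step. With your version of the proposition, that construction already yields the contradiction $(p-1)/(p+1)\in\bfZ$ for every value of $s(P)$. The dichotomy of \Cref{lem:51094} and the normal-form argument for $s(P)=e$ in \Cref{sec:61843} would then become unnecessary.

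\textbf{What the paper's route buys.} Mainly alignment with the Nori--Srinivas exact-differential picture; for this particular statement, exactness is not needed.
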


\begin{proof}
	Since $\omega$ is exact, we may choose $U\in\bfF_p\pparen{T}$ with $\d U=U'\d T=\omega$. Deleting all terms of $U$ whose exponents are divisible by $p$ does not change $U'$, so we may assume that $U$ has no terms in degrees divisible by $p$. In particular, its leading exponent $q\coloneqq v_T(U)\in\bfZ$ satisfies
	$$q=\opn{ord}_T(\omega)+1,\  p\nmid q.$$
	Because $\sigma(\omega)=\omega$, we have $\d(\sigma(U)-U)=\sigma(\d U)-\d U=0$. In characteristic $p$, the kernel of $\d$ is $\bfF_p\pparen{T}^p$, so there is a unique $B\in\bfF_p\pparen{T}$ such that $\sigma(U)-U=B^p$. For each $n\geq 0$, define the orbit sum
	\begin{equation}\label{eq:13709}
		x_n\coloneqq\sum_{j=0}^{p^n-1}\sigma^j(B)\in\bfF_p\pparen{T}.
	\end{equation}
	Raising $x_n$ to the $p$-th power, one obtains
	\begin{equation}\label{eq:94654}
		x_n^p=\sum_{j=0}^{p^n-1}\sigma^j(B^p)=\sum_{j=0}^{p^n-1}\bigl(\sigma^{j+1}(U)-\sigma^j(U)\bigr)=\sigma^{p^n}(U)-U.
	\end{equation}
	Since $p\nmid q=v_T(U)$, applying \Cref{lem:32526} to $\sigma^{p^n}$ shows that $\sigma^{p^n}(U)-U$ is nonzero of valuation $q+d_n$. By \eqref{eq:94654}, it follows that $B\neq 0$, $x_n\neq 0$, and
	$$v_T(x_n)=\frac{q+d_n}{p}\in\bfZ.$$
	On the other hand, applying $\sigma-\id$ to \eqref{eq:13709} gives $\sigma(x_n)-x_n=\sigma^{p^n}(B)-B$. By \Cref{lem:32526}, we have $v_T\left(\sigma^{p^n}(B)-B\right)\geq v_T(B)+d_n$, and hence
	$$v_T\left(\frac{\sigma(x_n)}{x_n}-1\right)\geq v_T(B)+d_n-\frac{q+d_n}{p}=v_T(B)-\frac{q}{p}+\left(1-\frac{1}{p}\right)d_n\longrightarrow+\infty$$
	as $n\lto\infty$. Finally, since $q+d_n=pv_T(x_n)\in p\bfZ$ and $d_n\lto b$ in $\bfZ_p$, the integers $v_T(x_n)=(q+d_n)/p$ converge $p$-adically to $(q+b)/p\in\bfZ_p$. Applying \Cref{lem:44677} to $(x_n)_{n\geq 0}$ gives $(q+b)/p=0$, i.e. $b=-q=-\opn{ord}_T(\omega)-1\in\bfZ$.
\end{proof}

\section{Arithmetic inputs from the false Tate curve extension}\label{sec:93558}

In this section, we establish the arithmetic properties of the false Tate curve extension $\bfQ_p^{\FT}/\bfQ_p$ needed for the obstructions of \Cref{sec:61843} and \Cref{sec:42840}: the ramification numbers of the Kummer iterates $\sigma_k\coloneqq\tau^{p^k}$ and the leading term of the tame generator $\delta$.

We first compute $d_k\coloneqq i_{\bfE_{\bfQ_p}}(\sigma_k)$ from Viviani's explicit description of the ramification filtration on the finite layers
$$L_n\coloneqq\bfQ_p\left(\zeta_{p^n},p^{1/p^n}\right),\  \Gamma_n\coloneqq\Gal(L_n/\bfQ_p).$$
For $g\in\Gamma_n$ with $g\neq\id$, set $i_{L_n}(g)\coloneqq v_{L_n}(g(\varpi)-\varpi)-1$, where $v_{L_n}$ is the normalized valuation on $L_n$ and $\varpi$ is any uniformizer of $L_n$. Set $i_{L_n}(\id)\coloneqq+\infty$. Let $(\Gamma_n)_s\coloneqq\{g\in\Gamma_n:i_{L_n}(g)\geq s\}$ be the lower numbering ramification group, and let $\Gamma_n^t\coloneqq(\Gamma_n)_{\psi_n(t)}$ be the upper numbering ramification group, where $\psi_n$ is the Herbrand function of $L_n/\bfQ_p$.

\begin{proposition}\label{prop:78229}
	For every $k\geq0$, the ramification number of $\sigma_k=\tau^{p^k}$ on $\bfE_{\bfQ_p}$ is given by
	\begin{equation}\label{eq:16300}
		d_k\coloneqq i_{\bfE_{\bfQ_p}}(\sigma_k)=\frac{2p^{2k+2}+p-1}{p+1}.
	\end{equation}
	In particular, one has
	\begin{propenum}
		\item $d_0=2p-1$, $d_k\equiv-1\pmod p$, and $d_{k+1}=p^2d_k-(p-1)^2\geq(p+1)d_k$ for all $k\geq 0$.
		\item As $k\to\infty$, one has $d_k\lto+\infty$ in $\bfR$, and
		\begin{equation}\label{eq:56188}
			d_k\xlongrightarrow{p\text{-adically}}\frac{p-1}{p+1}\in\bfZ_p\backslash\bfZ.
		\end{equation}
	\end{propenum}
\end{proposition}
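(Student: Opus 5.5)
The plan is to compute $d_k$ by transporting ramification numbers from the finite layers $L_n$ to the field of norms, using Viviani's description of the ramification filtration. We then read off the listed properties by elementary arithmetic.

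\textbf{Step 1: compare with the finite layers.} By the construction of Fontaine--Wintenberger, the valuation on $\bfE_{\bfQ_p}$ is induced by the normalized valuations on the layers $L_n$. For $g\in\Gal(\bfQ_p^{\FT}/\bfQ_p)$ fixing $\bfQ_p(\zeta_{p^\infty})$ (so $g$ acts trivially on residue fields), the standard compatibility gives
$$i_{\bfE_{\bfQ_p}}(g)=\lim_{n\to\infty}i_{L_n}(g|_{L_n}),$$
and the limit stabilizes once $n$ is large. This holds because the lower numbering of $\Gal(L/K)$ on $X_K(L)$ is the Herbrand-transformed upper numbering of $L/K$, i.e. the lower numbering relative to $\bfQ_p$'s upper numbering composed with $\psi_{L/\bfQ_p}$. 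Concretely, I would invoke Wintenberger's result that $i_{X_K(L)}(g)=\psi_{L/K}(u(g))$, where $u(g)$ is the upper ramification break. The problem is thereby reduced to computing, for each $k$, the upper break $u_k$ of $\sigma_k=\tau^{p^k}$ in $\Gal(\bfQ_p^{\FT}/\bfQ_p)$, together with the limiting Herbrand function $\psi_{\bfQ_p^{\FT}/\bfQ_p}$ at $u_k$.

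\textbf{Step 2: Viviani's input.} Viviani's \cite[Corollary 6.10]{vivianiRamificationGroupsArtin2004} describes the upper filtration explicitly. On the Kummer part $\Gal(\bfQ_p^{\FT}/\bfQ_p^{\opn{cyc}})\cong\bfZ_p$, the upper breaks are at $u_k=k+1+\frac{1}{p-1}$ (the jump for $\tau^{p^k}$), interleaved with the cyclotomic breaks at the integers. Computing $\psi$ on $[0,u_k]$ means integrating $[\Gamma^0:\Gamma^t]$. On $(j,j+1]$ for $j\ge1$ this index is $(p-1)p^{j}\cdot p^{j}$ or $(p-1)p^{j}\cdot p^{j+1}$, depending on whether $t$ lies before or after the Kummer break $j+\frac{1}{p-1}$. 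On $(0,1]$ the contribution comes from the tame quotient of order $p-1$ together with the first Kummer jump. Summing the resulting geometric series produces
$$\psi(u_k)=\frac{2p^{2k+2}+p-1}{p+1}.$$
As a consistency check, $k=0$ gives $d_0=2p-1$. The main obstacle is getting the bookkeeping of the interleaved breaks and the indices exactly right in this step; I would verify the final formula at $k=0,1$ against a direct layer computation in $L_1,L_2$.

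\textbf{Step 3: the consequences.} From \eqref{eq:16300} the listed properties are elementary.
\begin{itemize}
\item Substituting $k=0$ gives $d_0=(2p^2+p-1)/(p+1)=2p-1$.
\item Modulo $p$, $(p+1)d_k\equiv p-1$, so $d_k\equiv-1$.
\item The recursion $d_{k+1}=p^2d_k-(p-1)^2$ follows from $p^2(p-1)-(p-1)^2(p+1)=-(p-1)(p^2-... )$; more simply, $(p+1)(d_{k+1}-p^2d_k)=(p-1)(1-p^2)$, so $d_{k+1}-p^2d_k=-(p-1)^2$.
\item For the inequality, $d_{k+1}-(p+1)d_k=(p^2-p-1)d_k-(p-1)^2\ge0$, since $d_k\ge2p-1$.
\item Real divergence is obvious. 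The $p$-adic limit is $(p-1)/(p+1)$, because $p^{2k+2}\to0$ and $p+1\in\bfZ_p^\times$.
\item Finally, $(p-1)/(p+1)\notin\bfZ$, since $0<(p-1)/(p+1)<1$.
\end{itemize}
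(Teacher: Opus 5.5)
Your route is the paper's route. You use Wintenberger's comparison $i_{\bfE_{\bfQ_p}}(g)=i_{L_n}(g|_{L_n})$ for $n\gg0$ (equivalently $i_{\bfE_{\bfQ_p}}(g)=\psi(u(g))$), take Viviani's upper break $t_k=k+1+\frac{1}{p-1}$ for $\tau^{p^k}$, and finish with elementary arithmetic, which you carry out correctly in Step 3. The one point of departure is that the paper quotes Viviani's value $\psi_n(t_k)=\frac{2p^{2k+2}+p-1}{p+1}$ directly. You instead propose to rederive it by integrating $[\Gamma_n:\Gamma_n^t]$, and the indices you write down are wrong.

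The Kummer breaks sit at $1+\frac{1}{p-1},2+\frac{1}{p-1},\dots$. So there is no Kummer jump in $(0,1]$: there $\Gamma_n^t=C(p^n)\rtimes G(p^n)^1$ and the index is just $p-1$. On $(j,j+1]$ with $j\geq1$, only $j-1$ Kummer jumps occur before $j+\frac{1}{p-1}$. Hence $\Gamma_n^t=C(p^{n-j+1})\rtimes G(p^n)^{j+1}$ for $j<t\leq j+\frac{1}{p-1}$, and $\Gamma_n^t=C(p^{n-j})\rtimes G(p^n)^{j+1}$ for $j+\frac{1}{p-1}<t\leq j+1$. The indices are $(p-1)p^{2j-1}$ and $(p-1)p^{2j}$ respectively, not your $(p-1)p^{2j}$ and $(p-1)p^{2j+1}$. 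Your values are too large by a factor of $p$. Even taking the index on $(0,1]$ to be $p-1$, at $k=0$ they give $\psi(t_0)=(p-1)+p^2\neq 2p-1$. Your ``consistency check'' only substitutes $k=0$ into the target formula; it does not test the sum you propose.

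With the corrected indices the integration closes:
\[
\psi(t_k)=(p-1)+\sum_{j=1}^{k}\bigl(p^{2j-1}+(p-2)p^{2j}\bigr)+p^{2k+1}=\frac{2p^{2k+2}+p-1}{p+1}.
\]
So this is a repairable bookkeeping error, not a flaw in the strategy. Steps 1 and 3 are fine as written. The restriction in Step 1 to $g$ fixing $\bfQ_p(\zeta_{p^\infty})$ is unnecessary, since the whole extension is totally ramified. If you integrate rather than cite, take the group indices from Viviani's filtration, not his printed lower indices at the integer breaks, which contain a misprint (cf.\ \Cref{rem:24777}).
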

\begin{proof}
	\cite[Remark 4.5]{caisCharacterizationStrictlyAPF2016} shows that the extension $\bfQ_p^{\FT}/\bfQ_p$ is a strictly arithmetically profinite extension. Since the finite Galois extensions $L_n$ form a cofinal tower in $\bfQ_p^{\FT}$, Fontaine and Wintenberger's relation between the ramification filtrations of $L_n/\bfQ_p$ and of $\bfE_{\bfQ_p}$ (cf. \cite[Proposition 3.3.2]{wintenbergerCorpsNormesCertaines1983}) implies that for every $g\in\Gamma$,
	\begin{equation}\label{eq:52480}
		i_{\bfE_{\bfQ_p}}(g)=i_{L_n}(g|_{L_n})\ \text{for all sufficiently large }n.
	\end{equation}

	Fix $k\geq 0$ and choose $n\geq k+3$ large enough for \eqref{eq:52480} to hold for $g=\sigma_k$. We read off $i_{L_n}(\sigma_k|_{L_n})$ from the upper ramification filtration of $\Gamma_n$ computed in \cite[Corollary~6.10]{vivianiRamificationGroupsArtin2004}. In the notation of \cite{vivianiRamificationGroupsArtin2004}, one has
	$$\Gamma_n=C(p^n)\rtimes G(p^n),\  G(p^n)\cong(\bfZ/p^n\bfZ)^\times,$$
	where $C(p^n)$ is the cyclic Kummer subgroup of order $p^n$ generated by $\tau|_{L_n}$, $C(p^a)$ denotes its unique subgroup of order $p^a$, and $G(p^n)^a$ denotes the subgroup of $(\bfZ/p^n\bfZ)^\times$ consisting of classes congruent to $1$ modulo $p^a$.

	Set $t_k\coloneqq k+1+\frac{1}{p-1}$. By \cite[Corollary~6.10]{vivianiRamificationGroupsArtin2004}, the upper ramification groups at and immediately after $t_k$ are
	$$\Gamma_n^{t_k}=C(p^{n-k})\rtimes G(p^n)^{k+2},\ \Gamma_n^t=C(p^{n-k-1})\rtimes G(p^n)^{k+2}\ (t_k<t\leq k+2),$$
	and the lower break corresponding to the upper break $t_k$ is
	\begin{equation}\label{eq:54219}
		\psi_n(t_k)=\frac{2p^{2k+2}+p-1}{p+1}.
	\end{equation}
	Since $\sigma_k=\tau^{p^k}$, its restriction $\sigma_k|_{L_n}$ is a generator of $C(p^{n-k})$. As a result, $\sigma_k|_{L_n}$ belongs to $\Gamma_n^{t_k}=(\Gamma_n)_{\psi_n(t_k)}$ but not to $\Gamma_n^t$ for any $t>t_k$. Hence $i_{L_n}(\sigma_k|_{L_n})=\psi_n(t_k)$, and equality \eqref{eq:16300} follows from \eqref{eq:52480} and \Cref{eq:54219}.

	The remaining assertions follow from \eqref{eq:16300} by straightforward calculations.
\end{proof}

\begin{remark}\label{rem:24777}
	In \cite[Corollary~6.10]{vivianiRamificationGroupsArtin2004}, the lower index attached to the integer upper break
	$$\Gamma_n^{i+1}=C(p^{n-i})\rtimes G(p^n)^{i+1}\ (1\leq i\leq n-2)$$
	is printed as $(p-1)(p^{2i}+1)/(p+1)$, which is a misprint for $(p-1)(p^{2i+1}+1)/(p+1)$. Indeed, on the interval $i+\frac{1}{p-1}<t\leq i+1$ of length $\frac{p-2}{p-1}$, the subgroup $\Gamma_n^t$ has index $[\Gamma_n:\Gamma_n^t]=(p-1)p^{2i}$, so integrating $[\Gamma_n:\Gamma_n^t]$ from the preceding fractional break $i+\frac{1}{p-1}$ gives
	$$\psi_n(i+1)=\frac{2p^{2i}+p-1}{p+1}+\frac{p-2}{p-1}\cdot(p-1)p^{2i} =\frac{(p-1)(p^{2i+1}+1)}{p+1}.$$
	Notice that the proof of \Cref{prop:78229} only uses the fractional breaks $t_k=k+1+\frac{1}{p-1}$, whose lower indices $\psi_n(t_k)$ are stated correctly in \cite{vivianiRamificationGroupsArtin2004}.
\end{remark}

Next, we use the semi-direct product relation $\delta\tau\delta^{-1}=\tau^{\chi(\delta)}$ from \Cref{lem:23436} together with $d_0=2p-1$ to determine the linear coefficient of the tame generator $\delta$.

\begin{lemma}\label{lem:99761}
	For every uniformizer $\pi$ of $\bfE_{\bfQ_p}$, one has
	$$f_{\delta,\pi}=\overline{\chi(\delta)}\,T+O\left(T^2\right),$$
	where $\chi$ is the $p$-adic cyclotomic character. In particular, the linear coefficient of $f_{\delta,\pi}$ has multiplicative order $p-1$ in $\bfF_p^\times$.
\end{lemma}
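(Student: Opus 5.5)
Write $\lambda\in\bfF_p^\times$ for the linear coefficient of $f_{\delta,\pi}$ (nonzero by \Cref{lem:16018}) and $\chi\coloneqq\chi(\delta)\in\bfZ_p^\times$. The plan is to compare the leading nonlinear terms of $f_{\delta\tau\delta^{-1},\pi}$ and $f_{\tau^{\chi},\pi}$, which are equal because $\delta\tau\delta^{-1}=\tau^{\chi}$ by \Cref{lem:23436}. First, by \Cref{lem:23769} the linear coefficient of $f_{\delta,\pi}$ does not depend on $\pi$, since conjugating by $V\in T\bfF_p\bbrac{T}$ with nonzero linear coefficient leaves linear coefficients unchanged. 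So it suffices to work with one convenient uniformizer.

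\textbf{Step 1: a normal form for $\tau$.} By \Cref{prop:78229}, $d_0=i_{\bfE_{\bfQ_p}}(\tau)=2p-1\equiv-1\pmod p$ and $i_{\bfE_{\bfQ_p}}(\tau^p)=d_1\geq(p+1)d_0$. Hence \Cref{thm:41759} applies and gives a uniformizer $\pi$ with $f_{\tau,\pi}=T+\alpha T^{d+1}+O(T^{2d+1})$, where $d=d_0$ and $\alpha\in\bfF_p^\times$. Actually only $f_{\tau,\pi}=T+\alpha T^{d+1}+O(T^{d+2})$ is needed, and this holds for any $\pi$.

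\textbf{Step 2: the two sides of the relation.} Write $f_\delta=\lambda T+O(T^2)$.

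For the conjugate, \Cref{lem:31185} gives $f_{\delta\tau\delta^{-1}}=f_{\delta^{-1}}\circ f_\tau\circ f_\delta$. Here $f_{\delta^{-1}}=f_\delta\cinv$ and $f_\delta\cinv=\lambda^{-1}T+O(T^2)$. A direct first-order computation, of the same kind as in \Cref{lem:32526}, gives
$$f_\delta\cinv\circ f_\tau\circ f_\delta=T+\alpha\lambda^{d}T^{d+1}+O(T^{d+2}).$$
The cleanest way to see this is to view $f_\tau$ as $T+\alpha T^{d+1}+\cdots$ and note that conjugation by a series with linear coefficient $\lambda$ rescales the coefficient of $T^{d+1}$ by $\lambda^{(d+1)-1}=\lambda^d$. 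The higher-order terms of $f_\delta$ contribute only at degree $\geq d+2$; checking this requires $(f_\delta\cinv)'$ evaluated near $f_\delta$.

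For the power, write $\chi=\sum c_iT$-independent: since $N=\sigma-\id$ raises valuation by $d$, we get $\tau^n\ast\pi=\pi+n\alpha\pi^{d+1}+O(\pi^{d+2})$ for every integer $n\geq0$. Extending by continuity to $n\in\bfZ_p$ gives $f_{\tau^{\chi}}=T+\bar\chi\alpha T^{d+1}+O(T^{d+2})$. This continuity holds because $\tau^{p^k}$ acts by $T+O(T^{d_k+1})$ with $d_k\to\infty$.

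\textbf{Step 3: conclusion.} Comparing the two expressions gives $\lambda^{d}=\bar\chi$. Since $d=2p-1\equiv1\pmod{p-1}$ and $\lambda\in\bfF_p^\times$, we have $\lambda^d=\lambda$, hence $\lambda=\overline{\chi(\delta)}$. Because $\delta$ generates the torsion subgroup $(\bfZ/p\bfZ)^\times$ of $\bfZ_p^\times$ and the torsion subgroup maps isomorphically onto $\bfF_p^\times$ under reduction, $\overline{\chi(\delta)}$ has order $p-1$.

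\textbf{Main obstacle.} The main difficulty is the careful first-order bookkeeping in Step 2. This includes the conjugation estimate, and also extending $n\mapsto f_{\tau^n}$ continuously to $\bfZ_p$, where one must ensure the $O(T^{d+2})$ error is uniform in $n$. I would handle the uniformity by applying \Cref{lem:32526} iteratively with $\sigma=\tau$, writing $\tau^n-\id=\sum_{j\geq1}\binom nj N^j$ and noting $N^j(\pi)=O(\pi^{jd+1})$ for $j\geq2$.
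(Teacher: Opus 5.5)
Your proposal is correct and follows essentially the same route as the paper. Both arguments compare the coefficient of $T^{d+1}$ on the two sides of $f_{\delta,\pi}^{\circ-1}\circ f_{\tau,\pi}\circ f_{\delta,\pi}=f_{\tau^{\chi(\delta)},\pi}$: conjugation gives $\alpha\lambda^{d}$, and iterating $f_{\tau,\pi}$ and passing to the $p$-adic limit gives $\overline{\chi(\delta)}\alpha$; then $d=2p-1\equiv 1\pmod{p-1}$ yields $\lambda=\overline{\chi(\delta)}$. The normal form of Step 1 and the independence of $\lambda$ from $\pi$ are, as you note, unnecessary, and the stray phrase about writing $\chi$ as a sum should simply be deleted.
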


\begin{proof}
	Set $d\coloneqq d_0=i_{\bfE_{\bfQ_p}}(\tau)$, so that $d=2p-1$ by \Cref{prop:78229}. By \Cref{lem:16018}, we may write
	$$f_{\tau,\pi}=T+\alpha T^{d+1}+O(T^{d+2}),\  f_{\delta,\pi}=\lambda T+O(T^2),\ \alpha,\lambda\in\bfF_p^\times.$$
	By \Cref{lem:31185}, the relation $\delta\tau\delta^{-1}=\tau^{\chi(\delta)}$ translates into the identity
	\begin{equation}\label{eq:57340}
		f_{\delta,\pi}\cinv\circ f_{\tau,\pi}\circ f_{\delta,\pi}=f_{\tau^{\chi(\delta)},\pi}.
	\end{equation}
	We compare the coefficients of $T^{d+1}$ on both sides of \eqref{eq:57340}. On the left-hand side, substituting $f_{\delta,\pi}=\lambda T+O(T^2)$ into $f_{\tau,\pi}$ gives
	$$f_{\tau,\pi}\circ f_{\delta,\pi}=f_{\delta,\pi}+\alpha\lambda^{d+1}T^{d+1}+O\left(T^{d+2}\right).$$
	Since $f_{\delta,\pi}\cinv=\lambda^{-1}T+O(T^2)$ satisfies $f_{\delta,\pi}\cinv\circ f_{\delta,\pi}=T$ and $(f_{\delta,\pi}\cinv)'\circ f_{\delta,\pi}=\lambda^{-1}+O(T)$, composing on the left with $f_{\delta,\pi}\cinv$ and expanding at $f_{\delta,\pi}$ yields
	$$f_{\delta,\pi}\cinv\circ f_{\tau,\pi}\circ f_{\delta,\pi}=T+\alpha\lambda^dT^{d+1}+O\left(T^{d+2}\right).$$

	On the right-hand side of \eqref{eq:57340}, for every integer $N\geq 1$, iterating $f_{\tau,\pi}$ gives
	$$f_{\tau^N,\pi}=f_{\tau,\pi}^{\circ N}=T+\overline N\alpha T^{d+1}+O(T^{d+2}).$$
	Choosing a sequence of positive integers $N$ converging to $\chi(\delta)$ in $\bfZ_p$. By continuity of the $\Gamma$-action, the coefficient of $T^{d+1}$ in $f_{\tau^{\chi(\delta)},\pi}$ is $\overline{\chi(\delta)}\alpha$.

	Since $\alpha\neq 0$, comparing the degree-$(d+1)$ coefficients gives $\lambda^d=\overline{\chi(\delta)}$. Finally, since $\lambda^{p-1}=1$ and $d=2p-1\equiv 1\pmod{p-1}$, we have $\lambda^d=\lambda$, and hence $\lambda=\overline{\chi(\delta)}$. By definition of $\delta$ as a generator of the torsion subgroup of $\Gal(\bfQ_p^{\FT}/\bfQ_p^{\opn{Kum}})\cong\bfZ_p^\times$, the reduction $\overline{\chi(\delta)}$ is a generator of $\bfF_p^\times$ and thus has order $p-1$.
\end{proof}

\section{Proof of the main results}\label{sec:91137}

We now assemble the results of the preceding sections to prove \Cref{thm:42866} and \Cref{thm:84624}. %

\begin{proof}[Proof of \Cref{thm:42866}]
	Assume for contradiction that there exist $P,F_\tau,F_\delta\in\bfR_{\bfQ_p}$ satisfying
	$$\overline P=T^p,\  \overline{F_\tau}=f_{\tau,u},\  \overline{F_\delta}=f_{\delta,u},\  F_\tau\circ P=P\circ F_\tau,\  F_\delta\circ P=P\circ F_\delta.$$
	Let $s\coloneqq s(P)\in\{1,\ldots,e\}$ be the derivative level of $P$, let $h_P\in\bfF_p\pparen{T}^\times$ be its normalized derivative (cf. \Cref{def:51837}), and put $m\coloneqq v_T(h_P)$. %

	We first suppose that $s=e$. For every integer $k\geq0$, the $p^k$-fold iterate $F_\tau^{\circ p^k}\in\bfR_{\bfQ_p}$ commutes with $P$ and reduces modulo $\varpi_F$ to $f_{\tau,u}^{\circ p^k}=f_{\sigma_k,u}$, where $\sigma_k=\tau^{p^k}$. By \Cref{prop:78229}, the ramification numbers $d_k=i_{\bfE_{\bfQ_p}}(\sigma_k)$ satisfy
	$$d_k\equiv-1\pmod p \text{ and }  i_{\bfE_{\bfQ_p}}(\sigma_k^p)=d_{k+1}\geq(p+1)d_k.$$
	As a result, \Cref{thm:41759} applies to each $\sigma_k$ and provides a uniformizer $u_k$ of $\bfE_{\bfQ_p}$ in which
	$$f_{\sigma_k,u_k}=T+\alpha_kT^{d_k+1}+O\left(T^{2d_k+1}\right),\  \alpha_k\in\bfF_p^\times.$$
	Choose a coefficientwise lift $H_k\in T(\calO_F/\varpi_F^{e+1}\calO_F)\bbrac{T}$ of $\ev_u^{-1}(u_k)\in T\bfF_p\bbrac{T}$, and set
	$$P_k\coloneqq H_k\circ P\circ H_k\cinv,\  G_k\coloneqq H_k\circ F_\tau^{\circ p^k}\circ H_k\cinv.$$
	By \Cref{prop:71074} and \Cref{lem:84891}, $P_k$ and $G_k$ commute in $\bfR_{\bfQ_p}$, $\overline{P_k}=T^p$, $\overline{G_k}=f_{\sigma_k,u_k}$, $s(P_k)=s(P)=e$, and $v_T(h_{P_k})=v_T(h_P)=m$. Applying the valuation bound of \Cref{thm:10556} to the pair $(P_k,G_k)$ for every $k\geq 0$ yields
	$$m\leq p-1-d_k\ \text{for all }k\geq 0.$$
	This is impossible, since $m$ is fixed while $\lim_{k\to\infty}d_k=+\infty$ by \Cref{prop:78229}.%

	Now we assume that $s<e$. By \Cref{prop:78229}, we have $i_{\bfE_{\bfQ_p}}(\tau)=d_0=2p-1\geq 1$, so $f_{\tau,u}\in T+T^2\bfF_p\bbrac{T}$. By \Cref{lem:99761}, the linear coefficient of $f_{\delta,u}$ has multiplicative order $p-1$ in $\bfF_p^\times$. As a result, \Cref{lem:45874} produces a nonzero exact differential $\omega\in\bfF_p\pparen{T}\d T$ that is fixed by $\tau$. On the other hand, \Cref{prop:78229} shows that $\tau$ acts with infinite order on $\bfE_{\bfQ_p}$ and that the ramification numbers $d_k=i_{\bfE_{\bfQ_p}}\left(\tau^{p^k}\right)$ converge $p$-adically to $(p-1)/(p+1)$. Applying \Cref{prop:15106} to $\sigma=\tau$ and $\omega$ then forces
	$$\frac{p-1}{p+1}=-\opn{ord}_T(\omega)-1\in\bfZ,$$
	which is impossible and completes  the proof of \Cref{thm:42866}.
\end{proof}

\begin{proof}[Proof of \Cref{thm:84624}]
	Let $F/\bfQ_p$ be a finite totally ramified extension. If the action of $\Gamma$ on $X_{\bfQ_p}(\bfQ_p^{\FT})$ were $F$-liftable in the sense of \Cref{def:49607}, there would exist $P$ and $(F_g)_{g\in\Gamma}$ in $\bfA_{\bfQ_p}$ satisfying $\overline P=T^p$, $\ev_u(\overline{F_g})=g\ast u$, and $F_g\circ P=P\circ F_g$ for all $g\in\Gamma$. Reducing $P,F_\tau,F_\delta$ modulo $\varpi_F^{e+1}$ would then yield elements of $\bfR_{\bfQ_p}$ contradicting \Cref{thm:42866}.
\end{proof}
\printbibliography
\end{document}